\theoremstyle{plain}
\newtheorem{theorem}{Theorem}[section]
\newtheorem{lemma}[theorem]{Lemma}
\newtheorem{proposition}[theorem]{Proposition}
\newtheorem{corollary}[theorem]{Corollary}
\theoremstyle{definition}
\newtheorem{examples}[theorem]{Examples}
\newtheorem{assumptions}[theorem]{Assumptions}
\theoremstyle{remark}
\newtheorem{remark}[theorem]{Remark}
\newlist{tfae}{enumerate}{1}
\setlist[tfae,1]{label=(\roman*)}
\DeclareMathOperator{\ev}{ev}
\DeclareMathOperator{\Sup}{Sup}
\DeclareMathOperator{\can}{can}
\newcommand{\catfont}[1]{\mathbf{#1}}
\newcommand{\catC}{\catfont{C}}
\newcommand{\catA}{\catfont{A}}
\newcommand{\catB}{\catfont{B}}
\newcommand{\App}{\catfont{App}}
\newcommand{\Met}{\catfont{Met}}
\newcommand{\Set}{\catfont{Set}}
\newcommand{\Top}{\catfont{Top}}
\newcommand{\Ord}{\catfont{Ord}}
\newcommand{\MultiOrd}{\catfont{MultiOrd}}
\newcommand{\Rels}[1]{#1\text{-}\catfont{Rel}}
\newcommand{\Cats}[1]{#1\text{-}\catfont{Cat}}
\newcommand{\Gphs}[1]{#1\text{-}\catfont{Gph}}
\newcommand{\V}{V}
\newcommand{\W}{W}
\newcommand{\two}{2}
\newcommand{\op}{\mathrm{op}}
\newcommand{\sep}{\mathrm{sep}}
\newcommand{\relto}{\mathrel{\mathmakebox[\widthof{$\xrightarrow{\rule{1.45ex}{0ex}}$}]{\xrightarrow{\rule{1.45ex}{0ex}}\hspace*{-2.4ex}{\mapstochar}\hspace*{1.8ex}}}}
\def\slashedarrowfill@#1#2#3#4#5{%
  $\m@th\thickmuskip0mu\medmuskip\thickmuskip\thinmuskip\thickmuskip
   \relax#5#1\mkern-7mu%
   \cleaders\hbox{$#5\mkern-2mu#2\mkern-2mu$}\hfill
   \mathclap{#3}\mathclap{#2}%
   \cleaders\hbox{$#5\mkern-2mu#2\mkern-2mu$}\hfill
   \mkern-7mu#4$%
}
\newcommand*{\rightrelarrowfill@}{\slashedarrowfill@\relbar\relbar{\raisebox{0pc}{$\mapstochar$}}\rightarrow}
\newcommand*{\xrelto}[2][]{\ext@arrow 0055{\rightrelarrowfill@}{\;#1\;}{\;#2\;}}
\newcommand*{\rightmodarrowfill@}{\slashedarrowfill@\relbar\relbar{\raisebox{0pc}{$\hspace{0.3em}\circ$}}\rightarrow}
\newcommand*{\xmodto}[2][]{\ext@arrow 0055{\rightmodarrowfill@}{\;#1\;}{\;#2\;}}
\newcommand*{\rightkrelarrowfill@}{\slashedarrowfill@\relbar\relbar{\raisebox{0pc}{$\hspace{0.3em}\mapstochar$}}\rightharpoonup}
\newcommand*{\xkrelto}[2][]{\ext@arrow 0055{\rightkrelarrowfill@}{\;#1\;}{\;#2\;}}
\newcommand*{\rightkmodarrowfill@}{\slashedarrowfill@\relbar\relbar{\raisebox{0pc}{$\hspace{0.3em}\circ$}}\rightharpoonup}
\newcommand*{\xkmodto}[2][]{\ext@arrow 0055{\rightkmodarrowfill@}{\;#1\;}{\;#2\;}}
\newcommand{\fx}{\mathfrak{x}}
\newcommand{\fy}{\mathfrak{y}}
\newcommand{\fp}{\mathfrak{p}}
\newcommand{\fq}{\mathfrak{q}}
\newcommand{\fv}{\mathfrak{v}}
\newcommand{\fw}{\mathfrak{w}}
\newcommand{\fz}{\mathfrak{z}}
\newcommand{\fX}{\mathfrak{X}}
\newcommand{\fP}{\mathfrak{P}}
\newcommand{\fQ}{\mathfrak{Q}}
\newcommand{\mId}{\mathbb{I}}
\newcommand{\mH}{\mathbb{H}}
\newcommand{\mT}{\mathbb{T}}
\newcommand{\mU}{\mathbb{U}}
\newcommand{\mW}{\mathbb{W}}
\newcommand{\TV}{(\mT,\V)}
\DeclareMathAlphabet{\mathpzc}{OT1}{pzc}{m}{it}
\DeclareMathOperator{\yoneda}{\mathpzc{y}}
\newcommand{\monad}{(T,m,e)}
\author[M.M.~Clementino]{Maria Manuel Clementino}
\address{CMUC, Department of Mathematics, University of Coimbra, 3001-501 Coimbra,
  Portugal}
\email{mmc@mat.uc.pt}
\author[D.~Hofmann]{Dirk Hofmann}
\address{CIDMA, Department of Mathematics, University of Aveiro, 3810-193
  Aveiro, Portugal}
\email{dirk@ua.pt}
\author{Willian Ribeiro}
\address{CMUC, Department of Mathematics, University of Coimbra, 3001-501
  Coimbra, Portugal}
\email{willian.ribeiro.vs@gmail.com}
\thanks{Research partially supported by Centro de Matem\'{a}tica da Universidade
  de Coimbra -- UID/MAT/00324/2019, by Centro de Investiga\c{c}\~ao e
  Desenvolvimento em Matem\'atica e Aplica\c{c}\~oes da Universidade de
  Aveiro/FCT -- UID/MAT/04106/2019, funded by the Portuguese Government through
  FCT/MCTES and co-funded by the European Regional Development Fund through the
  Partnership Agreement PT2020. W. Ribeiro also acknowledges the FCT PhD Grant
  PD/BD/128059/2016.}
\keywords{quantale, enriched category, (probabilistic) metric space,
  exponentiation, (weakly) cartesian closed category, exact completion}
\subjclass[2010]{18B30, 18B35, 18D15, 18D20, 54B30, 54E35, 54E70}
\begin{document}

\title{Cartesian closed exact completions in topology}

\begin{abstract}
  Using generalized enriched categories, in this paper we show that
  Rosick\'{y}'s proof of cartesian closedness of the exact completion of the
  category of topological spaces can be extended to a wide range of topological
  categories over $\Set$, like metric spaces, approach spaces, ultrametric
  spaces, probabilistic metric spaces, and bitopological spaces. In order to do
  so we prove a sufficient criterion for exponentiability of $\TV$-categories
  and show that, under suitable conditions, every injective $\TV$-category is
  exponentiable in $\Cats{\TV}$.
\end{abstract}

\maketitle

\section{Introduction}

As Lawvere has shown in his celebrated paper \cite{Law73}, when $\V$ is a closed
category the category $\Cats{\V}$ of $\V$-enriched categories and $\V$-functors
is also monoidal closed. This result extends neither to the cartesian structure
nor to the more general setting of $\TV$-categories. Indeed, cartesian
closedness of $\V$ does not guarantee cartesian closedness of $\Cats{\V}$: take
for instance the category of (Lawvere's) metric spaces $\Cats{P_+}$, where $P_+$
is the complete real half-line, ordered with the $\geq$ relation, and equipped
with the monoidal structure given by addition $+$; $P_+$ is cartesian closed but
$\Cats{P_+}$ is not (see \cite{CH06} for details); and, even when the monoidal
structure of $\V$ is the cartesian one, the category $\Cats{\TV}$ of
$\TV$-categories and $\TV$-functors (see \cite{CT03}) does not need to be
cartesian closed, as it is the case of the category $\Top$ of topological spaces
and continuous maps, that is $\Cats{(\mU,\two)}$ for $\mU$ the ultrafilter
monad.

Rosick\'{y} showed in \cite{Ros99} that $\Top$ is weakly cartesian closed, and,
consequently, that its exact completion is cartesian closed. Weak cartesian
closedness of $\Top$ follows from the existence of enough injectives in its full
subcategory $\Top_0$ of $T0$-spaces and the fact that they are exponentiable,
and this feature, together with several good properties of $\Top$, gives
cartesian closedness of its exact completion. More precisely, Rosick\'{y} has
shown in \cite{Ros99} the following theorem.

\begin{theorem}\label{th:base}
  Let $\catC$ be a complete, infinitely extensive and well-powered category with
  (reg epi, mono)-factorizations such that $f\times 1$ is an epimorphism
  whenever $f$ is a regular epimorphism. Then the exact completion of $\catC$ is
  cartesian closed provided that $\catC$ is weakly cartesian closed.
\end{theorem}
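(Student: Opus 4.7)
The plan is to follow the Carboni--Rosolini blueprint for constructing exponentials in the exact completion $\catC_{ex}$. First I would recall that objects of $\catC_{ex}$ are pseudo-equivalence relations in $\catC$, that $\catC$ embeds fully faithfully into $\catC_{ex}$ as the subcategory of regular projectives, and that every object $A$ admits a projective cover $p_A\colon X_A \twoheadrightarrow A$ whose kernel pair is again representable by a pseudo-equivalence relation on the projective $X_A$. Finite limits in $\catC_{ex}$ are inherited from completeness of $\catC$, while infinite extensivity and (reg epi, mono)-factorizations transfer to $\catC_{ex}$ in a routine way, so I would focus solely on constructing exponentials.

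Given $A,B$ in $\catC_{ex}$ with chosen projective covers $p_A\colon X \twoheadrightarrow A$, $p_B\colon Y \twoheadrightarrow B$ and associated pseudo-equivalence relations $r_1,r_2\colon R \rightrightarrows X$ and $s_1,s_2\colon S \rightrightarrows Y$, weak cartesian closedness of $\catC$ supplies a weak exponential $Y^X$ with weak evaluation $Y^X \times X \to Y$. Using this together with (reg epi, mono)-factorizations and completeness, I would cut out a subobject $E \hookrightarrow Y^X$ representing those $f\colon X \to Y$ which carry $R$ into $S$, that is, for which a lift $R \to S$ over $(r_1,r_2)$ exists. The candidate exponential $B^A$ is then $E$ equipped with the pseudo-equivalence relation $\sim_E$ identifying $f,g \in E$ whenever there exists $\alpha \colon X \to S$ exhibiting them as pointwise $S$-related; the underlying subobject of $E \times E$ is again obtained by a (reg epi, mono)-factorization of the appropriate classifying map.

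The evaluation $\ev\colon B^A \times A \to B$ in $\catC_{ex}$ is induced by $p_B$ post-composed with the weak evaluation restricted to $E \times X$, which by construction respects both equivalence relations and hence descends to the quotient. For the universal property, given $h\colon C \times A \to B$, choose a projective cover $q\colon Z \twoheadrightarrow C$. The hypothesis that $f \times 1$ is an epimorphism whenever $f$ is a regular epi ensures that $q \times p_A \colon Z \times X \to C \times A$ is still a regular epi, so by projectivity of $Z \times X$ the composite $h \circ (q \times p_A)$ lifts along $p_B$ to a morphism $Z \times X \to Y$ in $\catC$. Weak cartesian closedness then transposes this to $Z \to Y^X$, which I would show factors through $E$ and descends to the desired transpose $\overline{h}\colon C \to B^A$.

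The main obstacle will be proving that $\overline{h}$ is well defined \emph{and} unique modulo $\sim_E$, since the weak exponential only provides non-unique transposes. Two different lifts of $h \circ (q \times p_A)$ along $p_B$ differ by a map into $S$, and two different transposes through $Y^X$ must be shown to land in $E$ and to be pointwise $S$-related; these comparisons are exactly what the pseudo-equivalence $\sim_E$ was designed to record. Infinite extensivity and well-poweredness enter when assembling the requisite subobjects of $Y^X$ and $E \times E$ as limits indexed by (possibly large) families of such lifts, ensuring that the delicate passage from "weak" to genuine uniqueness after quotienting actually goes through.
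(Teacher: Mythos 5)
First, a point of reference: the paper does not prove this theorem at all --- it is quoted verbatim from Rosick\'y \cite{Ros99}, so the only thing to compare your proposal against is Rosick\'y's published argument. Your opening moves (pseudo-equivalence relations, projective covers $X\to A$, $Y\to B$, a weak exponential of the covers, and a quotient by ``pointwise $S$-relatedness'') are the right starting point and agree with that argument in spirit. The genuine gap is the step where you ``cut out a subobject $E\hookrightarrow Y^X$ representing those $f\colon X\to Y$ which carry $R$ into $S$'' and likewise obtain the relation on $E$ ``by a (reg epi, mono)-factorization of the appropriate classifying map''. There is no classifying map: both conditions are existential (``there exists a lift $R\to S$'', ``there exists $\alpha\colon X\to S$''), the generalized points of a weak exponential do not correspond to maps $X\to Y$ (transposes are not unique), and the limits over large families you invoke at the end compute universally quantified conditions (intersections), not existentially quantified ones. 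This representability problem is precisely the mathematical content of the theorem, and your sketch assumes it away.

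Rosick\'y resolves it in the opposite direction: rather than carving the exponential out of $Y^X$ from above, he builds it from below by a cosolution-set argument --- for each test morphism he produces a quotient of bounded size through which it factors, uses well-poweredness to see that only a set of such quotients can occur, and uses infinite extensivity to assemble their coproduct, which is then shown to be a weak simple product in the sense of Carboni--Rosolini; cartesian closedness of $\catC_{\mathrm{ex}}$ follows from their characterization rather than from a bare-hands verification of the universal property. (This is also exactly the template the present paper follows in the proof of Theorem~\ref{th:wcc}.) So extensivity enters through coproducts, not limits. Two smaller slips: the hypothesis that $f\times 1$ is an epimorphism for $f$ a regular epimorphism yields only an epimorphism, so you cannot conclude that $q\times p_A$ is a \emph{regular} epimorphism; its actual role is to allow descent of comparisons along $q\times 1$. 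And the lifting of $h\circ(q\times p_A)$ along $p_B$ needs only projectivity of objects of $\catC$ against regular epimorphisms in $\catC_{\mathrm{ex}}$, not that hypothesis.
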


In this paper we use the setting of $\TV$-categories, for a quantale $\V$ and a
$\Set$-monad $\mT$ laxly extended to $\Rels{\V}$ to conclude, in a unified way,
that several topological categories over $\Set$ share with $\Top$ the cartesian
closedness of the exact completion. This was recently used by Ad\'{a}mek and
Rosick\'{y} in the study of free completions of categories \cite{AR18}. In fact,
the category $\Cats{\TV}$ is topological over $\Set$ \cite{CH03, CT03}, hence
complete and with (reg epi, mono)-factorizations such that $f\times 1$ is an
epimorphism whenever $f$ is, and it is infinitely extensive \cite{MST06}. To
assure weak cartesian closedness of $\Cats{\TV}$ we consider two distinct
scenarios, either restricting to the case that $\V$ is a frame -- so that its
monoidal structure is the cartesian one -- or considering the case that the lax
extension is determined by a $\mT$-algebraic structure on $\V$, as introduced in
\cite{Hof07} under the name of topological theory. In the latter case the proof
generalizes Rosick\'{y}'s proof for $\Top_0$, after observing that, using the
Yoneda embedding of \cite{CH09,Hof11}, every separated $\TV$-category can be
embedded in an injective one, and, moreover, these are exponentiable in
$\Cats{\TV}$. For general $\TV$-categories one proceeds again as in
\cite{Ros99}, using the fact that the reflection of $\Cats{\TV}$ into its full
subcategory of separated $\TV$-categories preserves finite products. As observed
by Rosick\'y, the exact completion of $\Top$ relates to the cartesian closed
category of equilogical spaces \cite{BBS04}. Analogously, our approach leads to
the study of generalized equilogical spaces, as developed in \cite{Rib18}.

The paper is organized as follows. In Section~\ref{sec:categ-tv-categ} we
introduce $\TV$-categories and list their properties used throughout the
paper. In Section~\ref{sec:expon-tv-categ} we revisit the exponentiability
problem in $\Cats{\TV}$, establishing a sufficient criterion for
exponentiability which generalizes the results obtained in
\cite{Hof07,HS15a}. In Section~\ref{sec:repr-tv-categ} we study the properties
of injective $\TV$-categories which will be used in the forthcoming section to
conclude that, under suitable assumptions, injective $\TV$-categories are
exponentiable (Theorem~\ref{d:thm:2}). This result will allow us to conclude, in
Theorem~\ref{th:wcc}, that $\Cats{\TV}$ is weakly cartesian closed, and,
finally, thanks to Theorem \ref{th:base}, that the exact completion of
$\Cats{\TV}$ is cartesian closed. We conclude our paper with a section on
examples, which include, among others, metric spaces, approach spaces,
probabilistic metric spaces, and bitopological spaces.

\section{The category of $\TV$-categories}
\label{sec:categ-tv-categ}

Throughout \emph{$\V$ is a commutative and unital quantale}, i.e $\V$ is a
complete lattice with a symmetric and associative tensor product $\otimes$, with
unit $k$ and right adjoint $\hom$, so that $u\otimes v\leq w$ if, and only if,
$v\leq\hom(u,w)$, for all $u,v,w\in \V$. Further assume that \emph{$\V$ is a
  Heyting algebra}, so that $u\wedge-$ also has a right adjoint, for every
$u\in\V$. We denote by $\Rels{\V}$ the 2-category of \emph{$\V$-relations} (or
$\V$-matrices), having as objects sets, as 1-cells $\V$-relations $r:X\relto Y$,
i.e. maps $r:X\times Y\to \V$, and 2-cells $\varphi:r\to r'$ given by
componentwise order $r(x,y)\leq r'(x,y)$.  Composition of 1-cells is given by
relational composition. $\Rels{\V}$ has an involution, given by transposition:
the transpose of $r:X\relto Y$ is $r^\circ:Y\relto X$ with
$r^\circ(y,x)=r(x,y)$.

We fix a \emph{non-trivial monad $\mT=(T,m,e)$ on $\Set$ satisfying (BC)},
i.e. $T$ preserves weak pullbacks and the naturality squares of the natural
transformation $m$ are weak pullbacks (see \cite{CHJ14}). In general we do not
assume that $T$ preserves products. Later we will make use of the comparison map
$\can_{X,Y}:T(X\times Y)\to TX\times TY$ defined by
$\can_{X,Y}(\fw)=(T\pi_X(\fw),T\pi_Y(\fw))$ for all $\fw\in T(X\times Y)$, where
$\pi_X$ and $\pi_Y$ are the product projections. Moreover, we assume that
\emph{$\mT$ has an extension to $\Rels{\V}$}, which we also denote by $\mT$, in
the following sense:
\begin{itemize}
\item[--] there is a lax functor $T:\Rels{\V}\to\Rels{\V}$ which extends
  $T:\Set\to\Set$;
\item[--] $T(r^\circ)=(Tr)^\circ$ for all $\V$-relations $r$;
\item[--] the natural transformations $e:1_{\Rels{\V}}\to T$ and $m:T^2\to T$
  become op-lax; that is, for every $r:X\relto Y$,
  \begin{align*}
    e_Y\cdot r\le T r\cdot e_X, && m_Y\cdot T T r\le T r\cdot m_X.\\
    \xymatrix{X\ar[r]^-{e_X}\ar[d]|{\object@{|}}_r\ar@{}[dr]|{\le} & T X\ar[d]|{\object@{|}}^{T r}\\ Y\ar[r]_-{e_Y} & T Y} &&
                                                                                                                              \xymatrix{T T X\ar[r]^-{m_X}\ar[d]|{\object@{|}}_{T T r} \ar@{}[dr]|{\le} & T X\ar[d]|{\object@{|}}^{T r}\\ T T Y\ar[r]_-{m_Y} & T Y}
  \end{align*}
\end{itemize}
We note that our conditions are stronger than those used in \cite{HST14}.

A \emph{$\TV$-category} is a pair $(X,a)$ where $X$ is a set and $a:TX\relto X$
is a $\V$-relation such that
\begin{align*}
  \xymatrix{X\ar[r]^{e_X}\ar[dr]_{1_X}^{\le} & TX\ar[d]|-{\object@{|}}^a\\ &X}
                                                                           &&\text{and}&&
                                                                                          \xymatrix{T^2X\ar[r]^{m_X}\ar[d]|-{\object@{|}}_{Ta}\ar@{}[dr]|{\le} &
                                                                                                                                                                 TX\ar[d]|-{\object@{|}}^a\\ TX\ar[r]|-{\object@{|}}_a & X}
\end{align*}
that is, the map $a:TX\times X\to\V$ satisfies the conditions:
\begin{itemize}
\item[(R)] for each $x\in X$, $k\leq a(e_X(x),x)$;
\item[(T)] for each $\fX\in T^2X, \fx\in TX, x\in X$,
  $Ta(\fX,\fx)\otimes a(\fx,x)\leq a(m_X(\fX),x)$.
\end{itemize}
Given $\TV$-categories $(X,a)$, $(Y,b)$, a \emph{$\TV$-functor}
$f:(X,a)\to(Y,b)$ is a map $f:X\to Y$ such that
\[
  \xymatrix{TX\ar[r]^{Tf}\ar[d]|-{\object@{|}}_a\ar@{}[dr]|{\le} &
    TY\ar[d]|-{\object@{|}}^b\\ X\ar[r]_f & Y}
\]
that is, for each $\fx\in TX$ and $x\in X$, $a(\fx,x)\leq b(Tf(\fx),f(x))$; $f$
is said to be \emph{fully faithful} when this inequality is an equality.

$\TV$-categories and $\TV$-functors form the category $\Cats{\TV}$. If
$(X,a:TX\relto X)$ satisfies (R) (and not necessarily (T)), we call it a
\emph{$\TV$-graph}. The category $\Gphs{\TV}$, of $\TV$-graphs and
$\TV$-functors, contains $\Cats{\TV}$ as a full reflective subcategory.

We present the examples in detail in the last section. We mention here, however,
that the leading examples are obtained when one considers the quantale
$\two=(\{0,1\},\le,\&,1)$ and Lawvere's real half-line
$P_+=([0,\infty],\geq,+,0)$, the identity monad $\mId$ and the ultrafilter monad
$\mU$ on $\Set$. Thus we obtain the following examples:
\begin{itemize}
\item[--] $\Cats{(\mId,\V)}$ is the category of $\V$-categories and
  $\V$-functors; in particular, $\Cats{(\mId,\two)}$ is the category $\Ord$ of
  (pre)ordered sets and monotone maps, while $\Cats{(\mId,P_+)}$ is the category
  $\Met$ of Lawvere's metric spaces and non-expansive maps (see \cite{Law73}).
\item[--] $\Cats{(\mU,\two)}$ is the category $\Top$ of topological spaces and
  continuous maps.
\item[--] $\Cats{(\mU,P_+)}$ is the category $\App$ of Lowen's approach spaces
  and non-expansive maps (see \cite{Low97}).
\end{itemize}

We recall (see \cite[Definition~21.1]{AHS90}) that a functor $G:\catA\to\catB$
is said to be \emph{topological} if every source $(f_i:B\to GA_i)_{i\in I}$ in
$\catB$ has a unique $G$-initial lift $(\overline{f_i}:A\to A_i)_{i\in I}$. The
following was proved in \cite{CH03} (see also \cite{CT03}).
\begin{theorem}
  The forgetful functors $\Cats{\TV}\to\Set$ and $\Gphs{\TV}\to\Set$ are
  topological.
\end{theorem}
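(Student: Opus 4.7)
The plan is to exhibit, for any source $(f_i\colon X\to U(Y_i,b_i))_{i\in I}$ in $\Set$ (with $U$ the relevant forgetful functor), an initial lift by defining the structure on $X$ as a pointwise infimum:
\[
  a(\fx,x) \;:=\; \bigwedge_{i\in I} b_i\bigl(Tf_i(\fx),\,f_i(x)\bigr)\qquad(\fx\in TX,\ x\in X).
\]
Uniqueness of the lift is automatic from faithfulness of $U$, so the content splits into three tasks: (i) $a$ is a $\TV$-graph, respectively $\TV$-category, structure; (ii) each $f_i\colon (X,a)\to (Y_i,b_i)$ is a $\TV$-functor; (iii) the lift is initial. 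Point (ii) is built into the definition of $a$, and (iii) is straightforward: if $g\colon Z\to X$ is a map in $\Set$ such that every $f_i\circ g\colon (Z,c)\to (Y_i,b_i)$ is a $\TV$-functor, then for all $\fz,z$ and $i$ one has $c(\fz,z)\leq b_i(Tf_i(Tg(\fz)),f_i(g(z)))$, and taking the infimum over $i$ yields $c(\fz,z)\leq a(Tg(\fz),g(z))$.

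For (i), reflexivity is the easy half: naturality $Tf_i\cdot e_X=e_{Y_i}\cdot f_i$ together with reflexivity of each $b_i$ gives $a(e_X(x),x)\geq k$, and this already settles the $\Gphs{\TV}$ case.

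The main work is transitivity for the $\TV$-category case. I argue coordinatewise: it suffices to show, for each $i$, that
\[
  Ta(\fX,\fx)\otimes a(\fx,x)\;\leq\; b_i\bigl(Tf_i(m_X(\fX)),\,f_i(x)\bigr),
\]
the conclusion following by infimum. Using naturality of $m$ to rewrite $Tf_i(m_X(\fX))=m_{Y_i}(TTf_i(\fX))$ and transitivity of $b_i$, the task reduces to the single inequality
\[
  Ta(\fX,\fx)\;\leq\; Tb_i\bigl(TTf_i(\fX),\,Tf_i(\fx)\bigr).
\]
Setting $a_i(\fx,x):=b_i(Tf_i(\fx),f_i(x))$, so that $a=\bigwedge_i a_i$, one notes that $a_i$ equals the relational composite $f_i^\circ\cdot b_i\cdot Tf_i$ in $\Rels{\V}$, and monotonicity of $T$ on $2$-cells gives $Ta\leq Ta_i$. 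The displayed inequality then follows once one identifies $Ta_i$ pointwise with $Tb_i(TTf_i(-),Tf_i(-))$.

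The main obstacle is precisely this last identification, namely the compatibility of the lax extension $T$ with relational composition against the graphs of the maps $f_i$ and $Tf_i$. This is exactly the place where the standing hypothesis that $\mT$ satisfies (BC) and extends laxly to $\Rels{\V}$ does its work: the extension is strict on graphs of functions, so $T(f_i^\circ\cdot b_i\cdot Tf_i)=(Tf_i)^\circ\cdot Tb_i\cdot TTf_i$, and since $Tf_i$ and $TTf_i$ are maps the right-hand side collapses pointwise to $Tb_i(TTf_i(\fX),Tf_i(\fx))$. All other steps are formal consequences of the infimum definition.
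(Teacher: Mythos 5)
Your proof is correct and is essentially the standard argument from \cite{CH03}, to which the paper defers for this theorem: the initial lift is the pointwise infimum $a=\bigwedge_i f_i^\circ\cdot b_i\cdot Tf_i$, reflexivity and initiality are immediate, and transitivity is checked coordinatewise via the whiskering identity $T(f^\circ\cdot b\cdot Tf)=(Tf)^\circ\cdot Tb\cdot TTf$ together with naturality of $m$. Two minor points of attribution: that identity follows from the lax-extension axioms (that $T$ extends $T:\Set\to\Set$ on graphs of maps and commutes with $(-)^\circ$), not from (BC); and uniqueness of the initial lift rests on amnesticity of the fibres (two $\TV$-structures on $X$ making $1_X$ a $\TV$-functor in both directions coincide) rather than on faithfulness alone.
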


This shows, in particular, that (see \cite[Chapter 21]{AHS90} for details):
\begin{itemize}
\item[--] $\Cats{\TV}$ is complete and cocomplete.
\item[--] Monomorphisms in $\Cats{\TV}$ are the morphisms whose underlying map
  is injective; therefore, since the $\TV$-structures on any set form a set,
  $\Cats{\TV}$ is well-powered.
\item[--] Every topological category over $\Set$ has two factorization systems,
  (reg epi, mono) and (epi, reg mono); in $\Cats{\TV}$ the former one is in
  general not stable (that is, regular epimorphisms need not be stable under
  pullback -- $\Top$ is such an example), but the latter one is. Indeed,
  epimorphisms in $\Cats{\TV}$ are the $\TV$-functors which are surjective as
  maps, the forgetful functor $\Cats{\TV}\to\Set$ preserves pullbacks, and
  surjective maps are stable under pullback in $\Set$. Therefore, as
  $f\times 1_Z$ is the pullback of $f:X\to Y$ along $\pi_Y:Y\times Z\to Y$, we
  conclude that $f\times 1_Z$ is an epimorphism provided $f$ is.
\end{itemize}

$\Cats{\TV}$ has a natural structure of 2-category: for $\TV$-functors
$f,g:(X,a)\to(Y,b)$, $f\leq g$ if $g\cdot a\leq b\cdot Tf$. This condition can
be equivalently written as $k\leq b(e_Y(f(x)),g(x))$ for every $x\in X$ (see
\cite{CT03} for details). We write $f\simeq g$ if $f\leq g$ and $g\leq f$.

Extensivity of $\Cats{\TV}$ was studied in \cite{MST06}:
\begin{theorem}
  $\Cats{\TV}$ is infinitely extensive.
\end{theorem}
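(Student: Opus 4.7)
The plan is to verify the two conditions defining infinite extensivity of a category with small coproducts: disjointness (coprojections are monomorphic, and the pullback of any two distinct coprojections is initial) and universality (coproducts are stable under pullback). Since $\Cats{\TV}$ is topological over the infinitely extensive category $\Set$, and the forgetful functor $U\colon\Cats{\TV}\to\Set$ both preserves and reflects limits and colimits, the strategy is to lift extensivity from $\Set$ along $U$.

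First I would spell out the relevant constructions concretely: for a family $((X_i,a_i))_{i\in I}$, the coproduct in $\Cats{\TV}$ has underlying set $\coprod_i X_i$ equipped with the \emph{final} $\TV$-structure $a$ with respect to the coprojections $s_i\colon X_i\to\coprod_j X_j$, while pullbacks have underlying pullback in $\Set$ equipped with the \emph{initial} $\TV$-structure. Disjointness is then immediate: the coprojections $s_i$ are injective (hence monomorphisms), and for $i\neq j$ the pullback $X_i\times_{\coprod_k X_k}X_j$ has empty underlying set and so is initial in $\Cats{\TV}$.

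For universality, given $f\colon(Y,b)\to(\coprod_i X_i,a)$, I would form the pullbacks $(Y_i,b_i)=f^{-1}(X_i)$, so that at the set level $Y=\coprod_i Y_i$ and each $b_i$ is the initial structure along the inclusion $t_i\colon Y_i\hookrightarrow Y$. Writing $b'$ for the final $\TV$-structure on $\coprod_i Y_i$ relative to the $t_i$, the canonical $\TV$-functor $\coprod_i(Y_i,b_i)\to(Y,b)$ is a bijection on underlying sets. The inequality $b'\leq b$ is automatic since each $t_i\colon(Y_i,b_i)\to(Y,b)$ is a $\TV$-functor by the pullback construction, and it remains to show $b\leq b'$.

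The main obstacle is precisely this last step, which requires showing that the lax extension of $T$ interacts suitably with coproducts, so that the coproduct $\TV$-structure is ``concentrated on fibres'' of the coprojections. Concretely, given $y\in Y_j$ and $\fy\in TY$, one uses that $f$ is a $\TV$-functor to reduce to $a(Tf(\fy),f(y))\geq b(\fy,y)$; combining the description of $a$ as a final structure, the preservation of weak pullbacks by $T$, and the op-lax compatibility of $e$ and $m$ with $\V$-relations, one argues that $Tf(\fy)$ must lie in the image of $Ts_j$ and then, via (BC) applied to the pullback square defining $Y_j$, lifts $\fy$ to some $\fy'\in TY_j$ with $b(\fy,y)\leq b_j(\fy',y)\leq b'(\fy,y)$. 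This fibre-concentration property is the genuine technical content of the argument, carried out in \cite{MST06}; once it is in hand, both extensivity conditions follow for arbitrary index sets.
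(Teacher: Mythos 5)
The paper gives no proof of this theorem at all---it simply points to \cite{MST06}---and your sketch correctly reconstructs the argument carried out there: disjointness from injectivity of the coprojections plus the empty initial object, and universality from the fibre-concentration of the final coproduct structure combined with (BC), with exactly that key lemma deferred to exactly that reference, so your route is consistent with the paper's. One small, non-load-bearing inaccuracy: the topological forgetful functor $\Cats{\TV}\to\Set$ preserves but does not \emph{reflect} limits and colimits (any cone over a set-level limit can be re-structured), which is why you rightly end up working with initial and final structures explicitly rather than relying on reflection.
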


In general $\Cats{\TV}$ is not cartesian closed, while $\Gphs{\TV}$ is. In fact,
the following was proved in \cite{CHT03}:

\begin{theorem}
  $\Gphs{\TV}$ is a quasi-topos.
\end{theorem}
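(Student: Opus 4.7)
The plan is to verify the defining properties of a quasi-topos for $\Gphs{\TV}$: finite completeness and cocompleteness with stable (regular epi, mono) factorizations, local cartesian closedness, and a classifier for strong monomorphisms. The first of these is immediate from the topologicity of $\Gphs{\TV}\to\Set$ already recorded.

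For cartesian closedness, I would construct, for $\TV$-graphs $(X,a)$ and $(Y,b)$, the exponential $(Y,b)^{(X,a)}$ on the underlying set $\Gphs{\TV}((X,a),(Y,b))$, equipped with the largest $\V$-relation $[a,b]$ making the evaluation $\ev:\Gphs{\TV}((X,a),(Y,b))\times X\to Y$ a $\TV$-graph morphism. Since $\V$ is a Heyting algebra and the product in $\Gphs{\TV}$ carries the initial structure (the meet of the component $\V$-relations pulled back along projections), this largest $[a,b]$ exists and is given by a pointwise infimum of Heyting implications. Axiom (R) for $[a,b]$ follows because each element of the underlying set is already a $\TV$-graph morphism, whose structure controls the behaviour of $\ev$ along the unit $e$. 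The crucial reason the construction succeeds in $\Gphs{\TV}$ but not in $\Cats{\TV}$ is that only (R) has to be preserved: transitivity (T) involves the tensor $\otimes$, which interacts badly with the $\wedge$-based product structure, obstructing the analogous construction for genuine $\TV$-categories.

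For the strong subobject classifier I would take $\Omega=(\{0,1\},\omega)$ with $\omega(\fz,i)=\top$ for every $\fz\in T\{0,1\}$ and $i\in\{0,1\}$, i.e.\ the maximal $\TV$-graph structure on $\{0,1\}$. In a topological category over $\Set$, strong monomorphisms coincide with initial (embedding) monomorphisms; for such $m:(A,a_0)\hookrightarrow(X,a)$, the set-theoretic characteristic map $\chi_m:X\to\{0,1\}$ lifts automatically to a $\TV$-graph morphism by the maximality of $\omega$, and the classifying pullback square is verified on underlying sets together with the fact that $a_0$ is the restriction of $a$ along $m$. Local cartesian closedness then follows by repeating the exponential construction fibrewise in each slice $\Gphs{\TV}/(Z,c)$, using that pullbacks in topological categories are computed on underlying sets with the initial structure.

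The main technical obstacle will be verifying the universal property of the exponential: given a $\TV$-graph morphism $f:(Z,c)\times(X,a)\to(Y,b)$, one must show that the transpose $\tilde f:(Z,c)\to((Y,b)^{(X,a)},[a,b])$ is itself a $\TV$-graph morphism, and conversely. This is a diagram chase built around the canonical map $\can:T(Z\times X)\to TZ\times TX$ and its interaction with $T\tilde f$ and $T\ev$; the BC hypothesis on $\mT$ is what is used to lift pairs $(\fq,\fx)\in TZ\times TX$ with matching images back to elements of $T(Z\times X)$, translating the $\TV$-graph condition on $Z\times X$ into the defining condition of $[a,b]$ on $\Gphs{\TV}((X,a),(Y,b))$.
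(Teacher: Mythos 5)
You should first be aware that the paper does not actually prove this statement: it is imported from \cite{CHT03}, and the only piece of that proof the paper records is the description of exponentials in $\Gphs{\TV}$ at the start of Section~\ref{sec:expon-tv-categ}. Measured against that description, your construction of the exponential contains a genuine error: you take the underlying set of $(Y,b)^{(X,a)}$ to be $\Gphs{\TV}((X,a),(Y,b))$, whereas it must be the set of $\TV$-functors $(X,a)\times(1,e_1^\circ)\to(Y,b)$, as in the paper. The reason is that the underlying-set functor $\Gphs{\TV}\to\Set$ is represented by the \emph{discrete} one-point graph $(1,e_1^\circ)$, not by the terminal object $(1,\top)$, so the points of any exponential are forced to be $\Gphs{\TV}((1,e_1^\circ)\times(X,a),(Y,b))$. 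The product $(1,e_1^\circ)\times(X,a)$ carries the structure $e_1^\circ(T\pi_1(\fw),\ast)\wedge a(T\pi_X(\fw),x)$, which lies strictly below $a$ whenever $T1\neq 1$: it annihilates every $\fw$ whose image in $T1$ differs from $e_1(\ast)$. Hence there are in general strictly more such $h$ than $\TV$-graph morphisms $X\to Y$ --- for the free-monoid monad $\mW$ used in the paper's Examples, $W1\cong\N$ and only words of length one impose any condition on $h$ --- and your candidate then violates the universal property: a morphism $(1,e_1^\circ)\times X\to Y$ that is not a morphism $X\to Y$ would have no transpose into your object, while the bijection $\Gphs{\TV}((1,e_1^\circ)\times X,Y)\cong|Y^X|$ is forced. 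Your construction is only correct when $T1=1$ (identity or ultrafilter monad), which is presumably why the discrepancy is easy to miss.

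The remainder of your plan is sound in outline and consistent with \cite{CHT03}: the largest structure $b^a$ rendering $\ev$ a morphism exists because $\V$ is a Heyting algebra and products carry the $\wedge$-initial structure; only reflexivity needs checking, which is exactly why the construction lives in $\Gphs{\TV}$ rather than $\Cats{\TV}$; and the indiscrete two-point graph $(\{0,1\},\top)$ classifies the strong (that is, initial) monomorphisms by the standard argument for topological categories over $\Set$. Local cartesian closedness, however, deserves more than ``repeat the construction fibrewise'': a slice $\Gphs{\TV}/(Z,c)$ is not itself a category of $\TV$-graphs, and one must exhibit a right adjoint to each pullback functor (partial products) explicitly; that is where most of the work in the cited proof lies.
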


We also note that the tensor product of $\V$ induces a canonical structure $c$
on $X\times Y$ defined by
\[
  c(\fw,(x,y))=a(T\pi_X(\fw),x)\otimes b(T\pi_Y(\fw),y),
\]
where $\fw\in T(X\times Y)$, $x\in X$, $y\in Y$. We put
\[
  (X,a)\otimes(Y,b)=(X\times Y,c),
\]
and this construction is in an obvious way part of a functor
$\otimes:\Gphs{\TV}\times\Gphs{\TV}\to\Gphs{\TV}$. However, the tensor product
of two $\TV$-categories is in general not a $\TV$-category (see
\cite[Lemma~6.1]{Hof07}).

Weak cartesian closedness of $\Cats{\TV}$ needs a thorough study of injective
$\TV$-categories and some extra conditions. This is the subject of the following
sections.

\section{Exponentiable $\TV$-categories}
\label{sec:expon-tv-categ}

Recall that an object $C$ of a category $\catC$ with finite products is
\emph{exponentiable} whenever the functor $C\times-:\catC\to\catC$ has a right
adjoint. The category $\catC$ is \emph{cartesian closed} if every object $C$ of
$\catC$ is exponentiable.  Equivalently, if for each pair of objects $A,B$ of
$\catC$ there exists an object $\langle A,B\rangle$ and a morphism
$\ev:\langle A,B\rangle\times A\to B$ such that, for each morphism
$f:C\times A\to B$ there exists a unique morphism
$\overline{f}:C\to\langle A,B\rangle$ with
$\ev\cdot (\overline{f}\times 1_A)=f$. Dropping uniqueness of $\overline{f}$
gives the notion of \emph{weakly cartesian closed category}.

In this section we present a sufficient condition for a $\TV$-category $X$ to be
exponentiable in $\Cats{\TV}$, which generalises \cite[Theorem~4.3]{Hof06} and
\cite[Theorem~6.5]{Hof07}. To start, we recall that $\Cats{\TV}$ can be fully
embedded into the cartesian closed category $\Gphs{\TV}$.  Here, for
$\TV$-graphs $(X,a)$ and $(Y,b)$, the exponential $\langle (X,a),(Y,b)\rangle$
has as underlying set
\[
  Z:=\{h:(X,a)\times (1,e_1^\circ)\to(Y,b)\mid h\text{ is a $\TV$-functor}\},
\]
which becomes a $\TV$-graph when equipped with the largest structure $b^a$
making the evaluation map
\[
  \ev:Z \times X \to Y,\; (h,x)\mapsto h(x)
\]
a $\TV$-functor: for $\fp\in TZ$ and $h\in Z$, put
\begin{equation*}
  b^a(\fp,h)
  =\bigvee\{v\in\V\mid \forall\fq\in (T\pi_Z)^{-1}(\fp),x\in X\;.\; a(T\pi_X(\fq),x)\wedge v\le b(T\!\ev(\fq),h(x))\},
\end{equation*}
where $\pi_X$ and $\pi_Z$ are the product projections.  Note that the supremum
above is even a maximum since $-\wedge-$ distributes over suprema.

Given $\V$-relations $r:X\relto X'$ and $s:Y\relto Y'$, we define in $\Rels{\V}$
$r\owedge s:X\times Y\relto X'\times Y'$ by
$(r\owedge s)((x,y),(x',y'))=r(x,x')\wedge s(y,y')$. That is,
$r\owedge s=(\pi_{X'}^\circ\cdot r\cdot\pi_X)\wedge(\pi_{Y'}^\circ\cdot
s\cdot\pi_Y)$ in the ordered set $\Rels{\V}(X\times Y,X'\times Y')$.
\begin{theorem}\label{d:thm:1}
  Assume that the diagram
  \begin{equation}\label{infi}
    \xymatrix{T(X\times Y)\ar[d]|-{\object@{|}}_{T(r\owedge s)}\ar[r]^{\can_{X,Y}} & TX\times TY\ar[d]|-{\object@{|}}^{(Tr)\owedge(Ts)}\\
      T(X'\times Y')\ar[r]_{\can_{X',Y'}} & TX'\times TY'}
  \end{equation}
  commutes, for all $\V$-relations $r:X\relto X'$ and $s:Y\relto Y'$. Then a
  $\TV$-category $(X,a)$ is exponentiable provided that
  \begin{align}\label{d:eq:1}
    \bigvee_{\fx\in TX}(Ta(\fX,\fx)\wedge u)\otimes(a(\fx,x)\wedge v)\ge a(m_X(\fX),x)\wedge(u\otimes v),
  \end{align}
  for all $\fX\in TTX$, $x\in X$ and $u,v\in\V$.
\end{theorem}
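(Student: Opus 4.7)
The strategy is to upgrade the exponential $\langle (X,a),(Y,b)\rangle = (Z, b^a)$ already provided by the cartesian closedness of $\Gphs{\TV}$: I will show that, under the stated hypotheses, $b^a$ is transitive whenever $(Y,b)$ is a $\TV$-category, so that $(Z, b^a)$ belongs to $\Cats{\TV}$. Because $\Cats{\TV}$ is full in $\Gphs{\TV}$ and (by an application of \eqref{infi}) the binary product of two $\TV$-categories taken in $\Gphs{\TV}$ is itself a $\TV$-category, the transpose bijection from $\Gphs{\TV}$ will restrict to give the $\Cats{\TV}$-adjunction $-\times(X,a) \dashv \langle(X,a),-\rangle$, witnessing that $(X,a)$ is exponentiable.

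To prove transitivity of $b^a$, fix $\fP \in T^2Z$, $\fp \in TZ$, and $h \in Z$; set $u := T(b^a)(\fP,\fp)$ and $v := b^a(\fp,h)$; and write $c = (b^a \owedge a)\cdot\can_{Z,X}$ for the product structure on $Z\times X$ in $\Gphs{\TV}$. By the definition of $b^a$ as a maximum, the target inequality $b^a(m_Z(\fP), h) \ge u \otimes v$ unfolds to the family of inequalities
\[
  a(T\pi_X(\fq), x) \wedge (u \otimes v) \le b(T\ev(\fq), h(x)),
\]
one for each $x \in X$ and $\fq \in T(Z\times X)$ with $T\pi_Z(\fq) = m_Z(\fP)$. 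First I apply (BC) to the naturality square of $m$ at $\pi_Z$ to obtain a lift $\fQ \in T^2(Z\times X)$ with $T^2\pi_Z(\fQ) = \fP$ and $m_{Z\times X}(\fQ) = \fq$. Setting $\fX := T^2\pi_X(\fQ)$, naturality of $m$ gives $m_X(\fX) = T\pi_X(\fq)$, so hypothesis \eqref{d:eq:1} reduces the goal to showing, for each $\fx \in TX$,
\[
  (Ta(\fX,\fx) \wedge u)\otimes(a(\fx,x)\wedge v) \le b(T\ev(\fq), h(x)).
\]

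Next I apply \eqref{infi} with $r = b^a$ and $s = a$; combined with the identity $Tc(\fQ,\fq') = T(b^a\owedge a)(T\can_{Z,X}(\fQ), \fq')$ (which uses that $T$ preserves composition with the function $\can_{Z,X}$), this rewrites $Ta(\fX,\fx)\wedge u$ as $\bigvee_{\fq'} Tc(\fQ,\fq')$, where $\fq'$ ranges over $T(Z\times X)$ with $T\pi_Z(\fq') = \fp$ and $T\pi_X(\fq') = \fx$. By distributivity of $\otimes$ over $\bigvee$, it then suffices to bound each term $Tc(\fQ,\fq')\otimes(a(\fx,x)\wedge v)$. Three ingredients close the chain: (i) since $\ev:(Z\times X, c)\to(Y,b)$ is a $\TV$-functor, $c \le \ev^\circ\cdot b\cdot T\ev$, and laxness of $T$ together with $T(r^\circ) = (Tr)^\circ$ yields $Tc(\fQ,\fq') \le Tb(T^2\ev(\fQ), T\ev(\fq'))$; (ii) the defining property of $v = b^a(\fp,h)$, applied to $\fq'$ (whose projection to $TZ$ is $\fp$), gives $a(\fx,x)\wedge v \le b(T\ev(\fq'), h(x))$; (iii) transitivity of $b$, together with $m_Y\cdot T^2\ev = T\ev\cdot m_{Z\times X}$ and $m_{Z\times X}(\fQ) = \fq$, yields $Tb(T^2\ev(\fQ),T\ev(\fq'))\otimes b(T\ev(\fq'),h(x)) \le b(T\ev(\fq), h(x))$.

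The hard part will be assembling these weak-pullback and naturality squares coherently so that the (BC) lift $\fQ$, the supremum produced by \eqref{infi}, and the identification $T\ev(\fq) = m_Y(T^2\ev(\fQ))$ all refer to the same $\fQ$. Once that bookkeeping is in place, \eqref{d:eq:1}, laxness of $T$, and transitivity of $b$ will combine in the expected way to conclude.
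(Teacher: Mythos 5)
Your proposal is correct and follows essentially the same route as the paper's proof: reduce exponentiability to transitivity of the graph-exponential structure $b^a$, lift along the (BC) square of $m$ at $\pi_Z$ to get $\fQ$, apply \eqref{d:eq:1} with $u=T(b^a)(\fP,\fp)$ and $v=b^a(\fp,h)$, then use \eqref{infi} to trade $Ta(\fX,\fx)\wedge u$ for a supremum over preimages $\fq'$ of $(\fp,\fx)$ under $\can_{Z,X}$, and close with functoriality of $\ev$, op-laxness, and transitivity of $b$. The only cosmetic difference is that you credit \eqref{infi} for the product of two $\TV$-categories being a $\TV$-category, which actually follows already from reflectivity of $\Cats{\TV}$ in $\Gphs{\TV}$; the substance of the argument is the paper's.
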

\begin{proof}
  We show that the $\TV$-graph structure $b^a$ on $Z$ is transitive, for each
  $\TV$-category $(Y,b)$. To this end, let $\fP\in TTZ$, $\fp\in TZ$, $h\in Z$,
  $x\in X$ and $\fw\in T(Z\times X)$ with $T\pi_Z(\fw)=m_{Z}(\fP)$. We have to
  show that
  \[
    (T(b^a)(\fP,\fp)\otimes b^a(\fp,h))\wedge a(T\pi_X(\fw),x) \le
    b(T\ev(\fw),h(x)).
  \]
  Since $m$ has (BC), there is some $\fQ\in TT(Z\times X)$ with
  $TT\pi_Z(\fQ)=\fP$ and $m_{Z\times X}(\fQ)=\fw$. Hence,
  $m_X(TT\pi_X(\fQ))=T\pi_X(\fw)$, and we calculate:
  \begin{multline*}
    (T(b^a)(\fP,\fp)\otimes b^a(\fp,h))\wedge a(T\pi_X(\fw),x)\\
    \begin{aligned}
      &\le \bigvee_{\fx\in TX}((T(b^a)(TT\pi_Z(\fQ),\fp)\wedge
      Ta(TT\pi_X(\fQ),\fx))
      \otimes (b^a(\fp,h)\wedge a(\fx,x))&& \text{(by \eqref{d:eq:1})}\\
      &\le \bigvee_{\fx\in TX} \bigvee_{\fq\in\can^{-1}(\fp,\fx)}
      T(b^a\owedge a)(T\can_{Z,X}(\fQ),\fq)\otimes (b^a\owedge a)(\can_{Z,X}(\fq),(h,x)) && \text{(using \eqref{infi})}\\
      &=\bigvee_{\fq\in (T\pi_Z)^{-1}(\fp)} T(b^a\owedge
      a)(T\can_{Z,X}(\fQ),\fq)\otimes
      (b^a\owedge a)(\can_{Z,X}(\fq),(h,x))\\
      &=\bigvee_{\fq\in (T\pi_Z)^{-1}(\fp)} T(b^a\times a)(\fQ,\fq)\otimes(b^a\times a)(\fq,(h,x))\\
      &\le \bigvee_{\fq\in (T\pi_Z)^{-1}(\fp)} Tb(TT\ev(\fQ),T\ev(\fq))\otimes b(T\ev(\fq),h(x))\\
      &\le b(m_Y\cdot TT\ev(\fQ),h(x))=b(T\ev(\fw),h(x)).
    \end{aligned}
  \end{multline*}
\end{proof}

\begin{remark}
  We note that the inequality
  $\can_{X',Y'}\cdot T(r\owedge s)\le ((Tr)\owedge(Ts))\cdot \can_{X,Y}$ is
  automatically true. Firstly, this inequality is equivalent to
  $T(r\owedge s)\le\can_{X',Y'}^\circ\cdot ((Tr)\owedge(Ts))\cdot \can_{X,Y}$;
  secondly,
  \begin{align*}
    T(r\owedge s)
    &= T((\pi_{X'}^\circ\cdot r\cdot\pi_X)\wedge(\pi_{Y'}^\circ\cdot
      s\cdot\pi_Y))\\
    &\le T(\pi_{X'}^\circ\cdot r\cdot\pi_X)\wedge T(\pi_{Y'}^\circ\cdot
      s\cdot\pi_Y)\\
    &\leq\can_{X',Y'}^\circ\cdot ((Tr)\owedge(Ts))\cdot \can_{X,Y}.
  \end{align*}
\end{remark}

It is worthwhile to notice that, when $V$ is a frame, that is $\otimes=\wedge$,
the condition above is equivalent to
\begin{align*}
  \bigvee_{\fx\in TX}Ta(\fX,\fx)\wedge a(\fx,x)\ge a(m_X(\fX),x),
\end{align*}
for all $\fX\in TTX$ and $x\in X$. Therefore:

\begin{corollary}
  When $V$ is a frame and \eqref{infi} commutes for all $\V$-relations
  $r:X\relto X'$ and $s:Y\relto Y'$, a $\TV$-category $(X,a)$ is exponentiable
  provided that
  \[
    a\cdot m_X=a\cdot Ta.
  \]
\end{corollary}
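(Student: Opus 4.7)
The plan is to deduce this from Theorem~\ref{d:thm:1} by showing that, when $\V$ is a frame, the technical inequality~\eqref{d:eq:1} is equivalent to the clean equation $a \cdot m_X = a \cdot Ta$. Since $\V$ being a frame forces $\otimes = \wedge$, and the unit $k$ must then be the top element $\top$, both sides of \eqref{d:eq:1} simplify in a useful way.

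First I would observe that the inequality $a \cdot Ta \le a \cdot m_X$ is automatic, since it is just the transitivity axiom (T) of a $\TV$-category spelled out: $Ta(\fX,\fx) \wedge a(\fx,x) \le a(m_X(\fX),x)$ for all $\fX,\fx,x$. Consequently, the equality $a \cdot m_X = a \cdot Ta$ is equivalent to the reverse inequality
\[
  a(m_X(\fX),x) \le \bigvee_{\fx \in TX} Ta(\fX,\fx) \wedge a(\fx,x)
\]
for all $\fX \in T^2X$ and $x \in X$, which is precisely the simplified form of \eqref{d:eq:1} displayed in the text just before the corollary.

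Next I would verify the equivalence between \eqref{d:eq:1} and this simplified form. The forward direction is immediate: setting $u = v = k = \top$ in \eqref{d:eq:1} and using that $\top$ is neutral for $\wedge$ gives the displayed inequality. For the converse direction, since $\V$ is a frame $\wedge$ distributes over arbitrary suprema, so for any $u, v \in \V$ the left-hand side of \eqref{d:eq:1} rewrites as
\[
  \bigvee_{\fx\in TX}\bigl(Ta(\fX,\fx) \wedge a(\fx,x) \wedge u \wedge v\bigr) = (u \wedge v) \wedge \bigvee_{\fx\in TX} Ta(\fX,\fx) \wedge a(\fx,x),
\]
which dominates $(u \wedge v) \wedge a(m_X(\fX),x)$ by the displayed hypothesis. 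With this equivalence in hand the corollary follows directly from Theorem~\ref{d:thm:1}, whose hypothesis on \eqref{infi} is assumed. I do not foresee any real obstacle: the entire argument is frame arithmetic, and all the substantive work has already been carried out in Theorem~\ref{d:thm:1}.
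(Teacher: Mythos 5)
Your proof is correct and follows exactly the route the paper takes: the remark preceding the corollary asserts that, for $\otimes=\wedge$, condition \eqref{d:eq:1} is equivalent to $\bigvee_{\fx}Ta(\fX,\fx)\wedge a(\fx,x)\ge a(m_X(\fX),x)$, and you simply supply the frame-arithmetic verification (specialising to $u=v=k=\top$ one way, distributing $\wedge$ over the supremum the other way) together with the observation that transitivity gives the opposite inequality, so the condition amounts to $a\cdot m_X=a\cdot Ta$. Nothing is missing.
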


\section{Injective and representable $\TV$-categories}
\label{sec:repr-tv-categ}

In this section we recall an important class of $\TV$-categories, the so-called
\emph{representable} ones. More information on this type of $\TV$-categories can
be found in \cite{CCH15,HST14}. We also recall from \cite{CH09,Hof07,Hof11} that
every injective $\TV$-category is representable.

Based on the lax extension of the $\Set$-monad $\mT=\monad$ to $\Rels{\V}$,
$\mT$ admits a natural extension to a monad on $\Cats{\V}$, in the sequel also
denoted by $\mT=\monad$ (see \cite{Tho09}). Here the functor
$T:\Cats{\V}\to\Cats{\V}$ sends a $\V$-category $(X,a_0)$ to $(TX,T a_0)$, and
$e_X:X\to TX$ and $m_X:TTX\to TX$ become $\V$-functors for each $\V$-category
$X$. The Eilenberg--Moore algebras for this monad can be described as triples
$(X,a_0,\alpha)$ where $(X,a_0)$ is a $\V$-category and $(X,\alpha)$ is an
algebra for the $\Set$-monad $\mT$ such that $\alpha:T(X,a_0)\to(X,a_0)$ is a
$\V$-functor. For $\mT$-algebras $(X,a_0,\alpha)$ and $(Y,b_0,\beta)$, a map
$f:X\to Y$ is a homomorphism $f:(X,a_0,\alpha)\to(Y,b_0,\beta)$ precisely if $f$
preserves both structures, that is, whenever $f:(X,a_0)\to(Y,b_0)$ is a
$\V$-functor and $f:(X,\alpha)\to(Y,\beta)$ is a $\mT$-homomorphism.

There are canonical adjoint functors
\[
  (\Cats{\V})^\mT\xymatrix@=8ex{\ar@{}[r]|{\top}\ar@<1mm>@/^{2mm}/[r]^{K} &
    \ar@<1mm>@/^{2mm}/[l]^{M}}\Cats{\TV}.
\]
The functor $K$ associates to each $X=(X,a_0,\alpha)$ in $(\Cats{\V})^\mT$ the
$\TV$-category $KX=(X,a)$, where $a=a_0\cdot\alpha$, and keeps morphisms
unchanged. Its left adjoint $M:\Cats{\TV}\to(\Cats{\V})^\mT$ sends a
$\TV$-category $(X,a)$ to $(TX,T a\cdot m_X^\circ,m_X)$ and a $\TV$-functor $f$
to $Tf$. Via the adjunction $M\dashv K$ one obtains a lifting of the
$\Set$-monad $\mT=\monad$ to a monad on $\Cats{\TV}$, also denoted by
$\mT=\monad$.

In this setting we can define `duals' in $(\Cats{\V})^\mT$ and carry them into
$\Cats{\TV}$. Indeed, since $T:\Rels{\V}\to\Rels{\V}$ commutes with the
involution $(-)^\circ$: for every $\mT$-algebra $X=(X,a_0,\alpha)$ also
$(X,a_0^\circ,\alpha)$ is a $\mT$-algebra. Moreover, if $(X,a)$ is a
$\TV$-category, we define $X^\op$ by mapping $(X,a)$ into $(\Cats{\V})^\mT$ via
$M$, dualizing the image in $(\Cats{\V})^\mT$, and then carrying it back to
$\Cats{\TV}$; that is,
\[
  X^\op=K((M(X,a))^\op)=(TX,m_X\cdot (Ta)^\circ\cdot m_X).
\]

Since the monad $\mT=\monad$ on $\Cats{\TV}$ is lax idempotent (i.e, of
Kock-Z\"oberlein type), an algebra structure $\alpha:TX\to X$ on a
$\TV$-category $X$ is left adjoint to the unit $e_X:X\to TX$. We call a
$\TV$-category $X$ \emph{representable} whenever $e_X:X\to TX$ has a left
adjoint in $\Cats{\TV}$; equivalently, whenever there is some $\TV$-functor
$\alpha:TX\to X$ with $\alpha\cdot e_X\simeq 1_X$, since then
\[
  e_X\cdot\alpha=T\alpha\cdot e_{TX} \ge T\alpha\cdot Te_X\simeq 1_{TX}.
\]
However, a left adjoint $\alpha:TX\to X$ to $e_X$ is in general only a
pseudo-algebra structure on $X$, that is,
\begin{align*}
  \alpha\cdot e_X&\simeq 1_X
  &\text{and}&& \alpha\cdot T\alpha&\simeq \alpha\cdot m_X.
\end{align*}
For every representable $\TV$-category $(X,a)$, the structure $a:TX\relto X$ can
be decomposed as $a=a_0\cdot \alpha$, where $a_0=a\cdot e_X$ denotes the
underlying $\V$-category structure.

A $\TV$-category $X$ is \emph{injective} whenever, for each fully faithful
$h:A\to B$ in $\Cats{\TV}$ and each $\TV$-functor $f:A\to X$, there is a
$\TV$-functor $g:B\to X$ with $g\cdot h\simeq f$.

\begin{proposition}
  Every injective $\TV$-category is representable.
\end{proposition}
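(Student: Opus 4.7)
The plan is to exploit the Kock--Zöberlein (lax idempotent) nature of the lifted monad $\mT$ on $\Cats{\TV}$, recalled just above the statement. The key preliminary observation is that the unit $e_X \colon X \to TX$ is a \emph{fully faithful} $\TV$-functor, where $TX$ carries the lifted $\TV$-structure $KM(X,a)=(TX, Ta\cdot m_X^\circ\cdot m_X)$. Unpacking this structure, one has
\[
  (Ta\cdot m_X^\circ\cdot m_X)(Te_X(\fx),e_X(x))
  = \bigvee_{\fZ\in TTX\,:\,m_X(\fZ)=\fx} Ta(\fZ,e_X(x)),
\]
and the witness $\fZ=e_{TX}(\fx)$ together with the (op-)lax naturality of $e$ and the monad laws (with (BC) providing the necessary exactness) shows that this supremum equals $a(\fx,x)$. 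In abstract terms, this is nothing but the standard fact that the unit of a KZ monad is fully faithful.

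Granted this, the proof of representability is a one-line application of injectivity. Suppose $X=(X,a)$ is injective. Applying the definition of injectivity to the identity $\TV$-functor $1_X \colon X\to X$ along the fully faithful $e_X \colon X\to TX$, we obtain a $\TV$-functor $\alpha \colon TX\to X$ with
\[
  \alpha\cdot e_X\simeq 1_X.
\]

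Finally, I would invoke the KZ calculation already displayed in the excerpt, namely
\[
  e_X\cdot\alpha=T\alpha\cdot e_{TX}\ge T\alpha\cdot Te_X\simeq 1_{TX},
\]
to conclude that $\alpha\dashv e_X$ in $\Cats{\TV}$. This is precisely the definition of representability given in the excerpt, so $X$ is representable.

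The main obstacle is the preliminary fact that $e_X$ is fully faithful as a $\TV$-functor into $KM(X,a)$; once this is accepted (as part of the KZ machinery set up immediately before the statement), the rest of the argument is a formal consequence of injectivity and of the lax-idempotent property of $\mT$.
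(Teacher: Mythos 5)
Your overall strategy is exactly the paper's: show that $e_X\colon(X,a)\to(TX,\,Ta\cdot m_X^\circ\cdot m_X)$ is fully faithful, apply injectivity to $1_X$ along it to obtain $\alpha$ with $\alpha\cdot e_X\simeq 1_X$, and invoke the lax-idempotency computation (already displayed in the paper before the proposition) to conclude $\alpha\dashv e_X$. The gap is in the one substantive step, the full fidelity of $e_X$. Exhibiting the witness $\fZ=e_{TX}(\fx)$ only proves
\[
  \bigvee_{\fZ\in TTX\,:\,m_X(\fZ)=\fx}Ta(\fZ,e_X(x))\;\ge\;a(\fx,x),
\]
i.e.\ that $e_X$ is a $\TV$-functor; a witness cannot show that the supremum \emph{equals} $a(\fx,x)$. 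The reverse inequality is where the content lies, and it does not follow from op-lax naturality of $e$, the monad laws, or (BC) (which plays no role here), nor is it an instance of a ``standard fact that units of KZ monads are fully faithful'' (units of lax idempotent monads need not be fully faithful in general). It uses that $(X,a)$ satisfies (R) and (T): for any $\fZ\in TTX$ with $m_X(\fZ)=\fx$,
\[
  Ta(\fZ,e_X(x))\;\le\;Ta(\fZ,e_X(x))\otimes a(e_X(x),x)\;\le\;a(m_X(\fZ),x)=a(\fx,x),
\]
which in relational form is precisely the paper's chain $e_X^\circ\cdot Ta\cdot m_X^\circ\cdot m_X\cdot Te_X\le a\cdot Ta\cdot m_X^\circ\le a\cdot m_X\cdot m_X^\circ\le a$. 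Once this inequality is supplied, the remainder of your argument is correct and coincides with the paper's proof.
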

\begin{proof}
  Let $X$ be an injective $\TV$-category. The $\TV$-functor
  $e_X:(X,a)\to (TX,Ta\cdot m_X^\circ\cdot m_X)$ is an embedding. Indeed, $e_X$
  is injective because the monad $T$ is non-trivial, and it is fully faithful:
  \[
    e_X^\circ\cdot Ta\cdot m_X^\circ\cdot m_X\cdot Te_X\leq a\cdot Ta\cdot
    m_X^\circ\leq a\cdot m_X\cdot m_X^\circ\leq a.
  \]
  Hence, there is a $\TV$-functor $\alpha:TX\to X$ with
  $\alpha\cdot e_X\simeq 1_X$, and so $X$ is representable.
\end{proof}

\section{Injective $\TV$-categories are exponentiable}
\label{sec:inject-tv-categ}

In Section~\ref{sec:catstv-weakly-cart} we will show that, under some
conditions, $\Cats{\TV}$ is weakly cartesian closed. Notably, we will use that
every $\TV$-category can be embedded into an injective one; which, by the main
result of this section, implies that every $\TV$-category can be embedded into
an exponentiable one. We hasten to remark that this is easily seen to be
fulfilled for $\mT$ being the identity monad, witnessed by the \emph{Yoneda
  embedding} (see \cite{Law73})
\begin{equation*}
  \yoneda_X:X\to PX:=\V^{X^\op}.
\end{equation*}
Here $PX$ is the free cocompletion of $X$; being cocomplete, $PX$ is injective.

To treat the general case, we \emph{will consider from now on only} extensions
of the monad $\mT$ to $\Rels{\V}$ given by a $\mT$-algebra structure
$\xi:TV\to V$ on $\V$, so that we are dealing with a \emph{strict topological
  theory} in the sense of \cite{Hof07}. In this case, the extension of
$T:\Set\to\Set$ to $\Rels{\V}$ is defined by
\begin{align*}
  Tr:TX\times TY &\to\V\\
  (\fx,\fy) &\mapsto\bigvee\left\{\xi\cdot Tr(\fw)\;\Bigl\lvert\;\fw\in T(X\times Y), T\pi_X(\fw)=\fx,T\pi_Y(\fw)=\fy\right\}
\end{align*}
for each $\V$-relation $r:X\times Y\to\V$.

In order to obtain a Yoneda embedding, we consider the $\mT$-algebra
$(V,\hom,\xi)$ which is mapped by $K$ into the important $\TV$-category
$(V, \hom_\xi)$, where $\hom_\xi=\hom\cdot\,\xi$ (see
Section~\ref{sec:repr-tv-categ}). The proof of the following result can be found
in \cite{CH09} and \cite{Hof11}.

\begin{theorem}\label{d:thm:3}
  If the extension of $\mT$ to $\Rels{V}$ is induced by a strict topological
  theory, then, for every $\TV$-category $(X,a)$, the $\V$-relation
  $a:TX\relto X$ defines a $\TV$-functor
  \[
    a:X^\op\otimes X\to(\V,\hom_\xi).
  \]
  Moreover, the $\otimes$-exponential mate
  $\yoneda_X=\,^\ulcorner\! a^\urcorner:X\to\V^{X^\op}$ of $a$ is fully
  faithful, and the $\TV$-category $PX=\V^{X^\op}$ is injective.

  In fact, this construction defines a functor $P:\Cats{\TV}\to\Cats{\TV}$ and
  $\yoneda=(\yoneda_X)_X$ is a natural transformation
  $\yoneda:1_{\Cats{\TV}}\to P$.
\end{theorem}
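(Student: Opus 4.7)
The plan is to establish the four assertions -- bifunctoriality of $a$, fully faithfulness of $\yoneda_X$, injectivity of $PX$, and functoriality of $P$ with naturality of $\yoneda$ -- in sequence, at each step transferring the problem through the adjunction $M\dashv K$ of Section~\ref{sec:repr-tv-categ} so that the strict topological theory $\xi:TV\to V$ becomes available as an equality rather than merely a lax inequality. For bifunctoriality of $a:X^\op\otimes X\to(\V,\hom_\xi)$, I would write $a^\op$ for the structure $m_X\cdot(Ta)^\circ\cdot m_X$ on $X^\op$, unpack the $\TV$-functor condition on the tensor product, and, via the adjunction $\otimes\dashv\hom$ together with $\hom_\xi=\hom\cdot\,\xi$, reduce it to the single inequality
\[
  \xi\cdot Ta(\fw)\otimes a^\op(T\pi_{TX}(\fw),\fx)\otimes a(T\pi_X(\fw),x)\le a(\fx,x)
\]
for all $\fw\in T(TX\times X)$, $\fx\in TX$, $x\in X$. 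This in turn follows from the transitivity axiom (T) for $(X,a)$ applied to a suitable lift of $\fw$ through $m_X$, combined with the algebra identity for $\xi$ witnessing the strictness hypothesis.

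Next, for fully faithfulness of $\yoneda_X:X\to PX$, I would compute the $\TV$-structure $b$ on $\V^{X^\op}$ explicitly from the function-space construction of Section~\ref{sec:expon-tv-categ} and aim to show
\[
  b(T\yoneda_X(\fx),\yoneda_X(x))=a(\fx,x)
\]
for all $\fx\in TX$ and $x\in X$. The inequality $\le$ is precisely the $\TV$-functoriality of $\yoneda_X$, which comes for free once $a$ is bifunctorial, by forming its exponential mate. For $\ge$, testing the supremum defining $b(T\yoneda_X(\fx),\yoneda_X(x))$ against the reflexivity witness at $x$, and evaluating along the identity of $X$, pinpoints $a(\fx,x)$ as a term in the join; this is the enriched Yoneda argument transported through $K$, and relies essentially on axiom (R).

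For injectivity of $PX$, I would first argue that $(\V,\hom_\xi)$ is itself injective in $\Cats{\TV}$: as the $K$-image of the algebra $(V,\hom,\xi)$ it is representable with algebra map $\xi$, and the suprema available in the quantale together with the residuation $\hom$ allow the construction, for every fully faithful $h:A\to B$ and $\TV$-functor $f:A\to(\V,\hom_\xi)$, of a pointwise right Kan extension $g:B\to(\V,\hom_\xi)$ with $g\cdot h\simeq f$. Since cotensors of injective $\TV$-categories are again injective (the extension being built componentwise), $PX=\V^{X^\op}$ inherits the property. Functoriality of $P$ and naturality of $\yoneda$ then follow routinely from the universal property of the exponential mate combined with functoriality of $(-)^\op$. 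The main obstacle I expect is the fully faithfulness step: turning the relational suprema that define $b$ into a clean equality hinges on strictness of $\xi$ (without which only an inequality survives), and demands careful bookkeeping of the lifts through $\can$ and $m$; the same interplay between $\V$-enrichment and the $\mT$-algebra structure governs the Kan-extension construction needed for injectivity.
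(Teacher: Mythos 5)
The paper offers no proof of this theorem: it is stated with the single remark that ``the proof of the following result can be found in \cite{CH09} and \cite{Hof11}'', so there is no in-paper argument to compare yours against. Measured against the proofs in those references, your outline follows the standard route for the first two claims. The reduction of $\TV$-functoriality of $a:X^\op\otimes X\to(\V,\hom_\xi)$ through the adjunction $\otimes\dashv\hom$ to the displayed inequality is correct, and that inequality does come down to transitivity of $(X,a)$ (applied twice, once at the level of $Ta$) together with the fact that $\xi\cdot Ta(\fw)$ is, by definition of the extension, one of the terms in the supremum defining the relational $Ta$ at $(T\pi_{TX}(\fw),T\pi_X(\fw))$; the algebra identity for $\xi$ plays no further role there. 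For full faithfulness, the easy inequality is indeed $\TV$-functoriality of the mate, and the reverse one is the $\TV$-enriched Yoneda lemma; your plan of bounding every $v$ in the supremum defining the exponential structure by instantiating its defining condition at a witness supplied by reflexivity is the right mechanism, though it is the step that genuinely needs the bookkeeping you anticipate.

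The step I would push back on is injectivity of $PX$. First, $\V^{X^\op}$ is a $\otimes$-exponential by a $\TV$-category, not a cotensor, and ``the extension being built componentwise'' is not how the argument goes: extending each component $\ev_{\fx}\cdot f:A\to\V$ separately (for $\fx$ ranging over the underlying set $TX$ of $X^\op$) produces a family of maps $B\to\V$ with no reason to reassemble into a single $\TV$-functor $B\to\V^{X^\op}$. The correct transfer is through the adjunction $-\otimes X^\op\dashv(-)^{X^\op}$: pass to the mate $A\otimes X^\op\to\V$ of $f$, extend it along $h\otimes 1_{X^\op}$ using injectivity of $(\V,\hom_\xi)$, and transpose back. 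This requires two inputs your sketch does not supply: that $h\otimes 1_{X^\op}$ is again fully faithful, and that the tensor product of $\TV$-categories (hence the $\otimes$-exponential $\V^{X^\op}$) is actually a $\TV$-category rather than merely a $\TV$-graph --- which is precisely where the strict topological theory hypothesis is indispensable, as the paper records a few lines after the theorem. With that repair the step is sound, and the remaining functoriality and naturality claims are routine, as you say.
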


Since $\yoneda_X$ is fully faithful, when $X$ is injective there exists a
$\TV$-functor $\Sup_X:PX\to X$ such that $\Sup_X\cdot\yoneda_X\simeq 1_X$. As
shown in \cite[Theorem 2.7]{Hof11}, $\Sup_X\dashv\yoneda_X$. Moreover, for each
$\TV$-category $(X,a)$, $\yoneda_X$ is one-to-one if, and only if, $(X,a)$ is
\emph{separated}, i.e.\ for every $f,g:(Y,b)\to(X,a)$, $f\simeq g$ only if $f=g$
(see \cite{HT10}, for example). It follows immediately that, for an injective
$\TV$-functor $f:X\to Y$ where $Y$ is separated, also $X$ is.

\begin{lemma}
  \label{d:lem:2}
  The $\otimes$-exponential $Y^X$ is separated, for every separated
  $\TV$-category $Y$ and every representable $\TV$-category $X$; in particular,
  $PX$ is separated, for every $\TV$-category $X$.
\end{lemma}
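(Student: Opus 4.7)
My plan is to exploit the defining universal property of the $\otimes$-exponential. Since $X$ is representable, the tensor $Z\otimes X$ remains a $\TV$-category for every $\TV$-category $Z$, and $X$ is $\otimes$-exponentiable in $\Cats{\TV}$ (see \cite{Hof07}), yielding a natural bijection
\[
  \Cats{\TV}(Z\otimes X,\,Y)\;\cong\;\Cats{\TV}(Z,\,Y^X),\qquad \bar{F}\longleftrightarrow F,
\]
with $\bar{F}(z,x)=F(z)(x)$ for all $z\in Z$, $x\in X$. The whole strategy is to reduce the separatedness of $Y^X$ to that of $Y$ through this bijection.

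To show $Y^X$ is separated, I would take $\TV$-functors $F,G\colon Z\to Y^X$ satisfying $F\simeq G$ and consider their transposes $\bar{F},\bar{G}\colon Z\otimes X\to Y$. The key step is to check that the $2$-cells transfer across the adjunction: $F\le G$ iff $\bar{F}\le\bar{G}$. Using the pointwise reformulation of $\le$ recalled in Section~\ref{sec:categ-tv-categ} (so that $F\le G$ amounts to $k\le b^a(e_{Y^X}F(z),G(z))$ for every $z\in Z$), together with the defining property of $b^a$ as the largest structure making evaluation a $\TV$-functor, this inequality unravels to $k\le b(e_Y\bar{F}(z,x),\bar{G}(z,x))$ for every $z\in Z$ and $x\in X$, which is precisely $\bar{F}\le\bar{G}$. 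Hence $F\simeq G$ forces $\bar{F}\simeq\bar{G}$, separatedness of $Y$ gives $\bar{F}=\bar{G}$, and the adjunction bijection returns $F=G$.

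For the final claim I would apply the main statement with $X^{\op}$ in place of $X$ and $Y=(\V,\hom_\xi)$. On the one hand, $X^{\op}=K((MX)^{\op})$ lies in the image of $K\colon(\Cats{\V})^{\mT}\to\Cats{\TV}$ (Section~\ref{sec:repr-tv-categ}), hence is representable; on the other, $(\V,\hom_\xi)$ is separated, because for $\TV$-functors $f,g\colon (W,c)\to(\V,\hom_\xi)$ the condition $f\le g$ unravels to $k\le\hom(f(w),g(w))$, i.e.\ $f(w)\le g(w)$ in $\V$ for every $w\in W$, so that $f\simeq g$ forces $f=g$ by antisymmetry of the order on $\V$. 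The main obstacle of the plan is the $2$-categorical transfer of $\le$ across the $\otimes$-exponential adjunction; once this pointwise description of $b^a$ at the unit is in hand, the rest is a purely formal consequence of the universal property.
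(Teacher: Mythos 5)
Your argument is correct, but it is genuinely different from what the paper does: the paper proves Lemma~\ref{d:lem:2} simply by citing \cite[Corollary~4.12~(2)]{HT10}, whereas you supply a direct, self-contained proof. Your route --- transpose $F\simeq G:Z\to Y^X$ across the $\otimes$-exponential adjunction and detect separatedness pointwise in $Y$ --- is the natural one, and the one delicate point, the transfer of $2$-cells, does go through: taking $\fq=e_{Y^X\otimes X}(h,x)$ in the defining formula for the exponential structure shows that $k\le b^a(e_{Y^X}(h),h')$ forces $k\le b(e_Y(h(x)),h'(x))$ for every $x$ (equivalently: $\ev$ is a $\TV$-functor, hence monotone for the underlying orders, and $h\le h'$ gives $(h,x)\le(h',x)$ in $Y^X\otimes X$). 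Note that only this direction of your claimed ``iff'' is needed. Two small touch-ups: the tensor $Z\otimes X$ of two $\TV$-categories is a $\TV$-category because we are in the strict-topological-theory setting, not because $X$ is representable --- representability is what buys $\otimes$-exponentiability of $X$; and in checking that $(\V,\hom_\xi)$ is separated you should make explicit that $\hom_\xi(e_\V(u),v)=\hom(\xi(e_\V(u)),v)=\hom(u,v)$ via the algebra identity $\xi\cdot e_\V=1_\V$, after which antisymmetry of the quantale order finishes the argument as you say. What your approach buys is independence from \cite{HT10}; what the paper's citation buys is brevity and a statement already formulated at the right level of generality.
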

\begin{proof}
  See \cite[Corollary~4.12~(2)]{HT10}.
\end{proof}

\begin{corollary}
  Every separated $\TV$-category embeds into an injective $\TV$-category.
\end{corollary}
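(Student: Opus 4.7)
The plan is to exhibit the required embedding as the Yoneda embedding $\yoneda_X \colon X \to PX$ from Theorem~\ref{d:thm:3}. By that theorem, $\yoneda_X$ is always a fully faithful $\TV$-functor into the injective $\TV$-category $PX = \V^{X^\op}$, so the only remaining issue is to ensure that $\yoneda_X$ is an embedding, i.e.\ an injective fully faithful $\TV$-functor.

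For this I would invoke the characterization of separatedness recalled just before Lemma~\ref{d:lem:2}: the $\TV$-functor $\yoneda_X$ is one-to-one precisely when $X$ is separated. Consequently, assuming $X$ separated, $\yoneda_X \colon X \to PX$ is simultaneously injective and fully faithful, hence an embedding, and its codomain $PX$ is injective by Theorem~\ref{d:thm:3}. This immediately yields the conclusion.

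There is no substantial obstacle here; the corollary is essentially a packaging of Theorem~\ref{d:thm:3} together with the cited characterization of separated $\TV$-categories, and Lemma~\ref{d:lem:2} is not strictly needed for this corollary itself (it will matter in later arguments, where one wants the embedding target to remain separated). The only point worth being explicit about in the write-up is the equivalence \emph{$X$ separated $\iff$ $\yoneda_X$ injective}, so that the reader sees why separatedness is exactly the hypothesis required to upgrade the fully faithful Yoneda $\TV$-functor to an honest embedding.
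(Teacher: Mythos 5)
Your proof is correct and follows exactly the route the paper intends: under the standing assumption of a strict topological theory, Theorem~\ref{d:thm:3} gives that $\yoneda_X\colon X\to PX$ is fully faithful with $PX$ injective, and the characterization ``$\yoneda_X$ is one-to-one iff $X$ is separated'' upgrades it to an embedding. Your observation that Lemma~\ref{d:lem:2} is not needed for this corollary itself (only for keeping the target separated in later arguments) is also accurate.
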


In Section~\ref{sec:categ-tv-categ} we introduced the tensor product
$X\otimes Y$ of $\TV$-graphs $X$ and $Y$. We remark that, in the setting of a
strict topological theory, $X\otimes Y$ is a $\TV$-category provided that $X$
and $Y$ are so (see \cite{Hof07}).

The result promised in the title of this section was shown in
\cite[Proposition~2.7]{Hof14} for the special case of $\otimes=\wedge$:

\begin{proposition}
  If the quantale $\V$ is a frame and \eqref{infi} commutes for all
  $\V$-relations $r:X\relto X'$ and $s:Y\relto Y'$, then every representable
  $\TV$-category is exponentiable. In particular, in this case every injective
  $\TV$-category is exponentiable.
\end{proposition}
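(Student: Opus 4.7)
The plan is to combine the corollary at the end of Section~\ref{sec:expon-tv-categ} with the structure of representable $\TV$-categories recalled in Section~\ref{sec:repr-tv-categ}. That corollary guarantees exponentiability of $(X,a)$ once $\V$ is a frame, the diagram \eqref{infi} commutes, and $a\cdot m_X = a\cdot Ta$. The transitivity axiom (T) already supplies $a\cdot Ta \le a\cdot m_X$, so everything reduces to proving the reverse inequality $a\cdot m_X \le a\cdot Ta$ whenever $(X,a)$ is representable. The ``in particular'' clause of the statement is then immediate from the Proposition in Section~\ref{sec:repr-tv-categ}.

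For representable $(X,a)$ I would exploit the decomposition $a = a_0\cdot\alpha$, where $\alpha:TX\to X$ is left adjoint to $e_X$ with $\alpha\cdot e_X\simeq 1_X$, $a_0 = a\cdot e_X$, and the pseudo-algebra equivalence $\alpha\cdot T\alpha \simeq \alpha\cdot m_X$. Lax functoriality of the extension $T:\Rels{\V}\to\Rels{\V}$ delivers
\begin{equation*}
    Ta \;=\; T(a_0\cdot\alpha) \;\ge\; Ta_0\cdot T\alpha,
\end{equation*}
while reflexivity $1_X\le a_0$ of the underlying $\V$-category, together with $1_{TX}\le T(1_X)$, forces $1_{TX}\le Ta_0$. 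Composing these estimates yields
\begin{equation*}
    a\cdot Ta \;\ge\; a_0\cdot\alpha\cdot Ta_0\cdot T\alpha \;\ge\; a_0\cdot\alpha\cdot T\alpha.
\end{equation*}

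It remains to identify $a_0\cdot\alpha\cdot T\alpha$ with $a_0\cdot\alpha\cdot m_X = a\cdot m_X$, which is where $\alpha\cdot T\alpha\simeq\alpha\cdot m_X$ is used. Reading $\simeq$ via the characterisation of 2-cells in $\Cats{\TV}$ recalled in Section~\ref{sec:categ-tv-categ}, the equivalence translates pointwise into $k\le a_0(\alpha(T\alpha(\fX)),\alpha(m_X(\fX)))$ and its symmetric counterpart for every $\fX\in TTX$. Composing with $a_0$ on the left and using transitivity of the $\V$-category $(X,a_0)$, these two inequalities promote the 2-cell equivalence to the relational identity $a_0\cdot\alpha\cdot T\alpha = a_0\cdot\alpha\cdot m_X$. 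Chaining everything delivers $a\cdot m_X \le a\cdot Ta$, as required. The step I expect to be most delicate is precisely this last one, bridging the 2-cell $\simeq$ and an equality of $\V$-relations; the frame hypothesis itself enters one level earlier, in the reduction to $a\cdot m_X = a\cdot Ta$ via the corollary of Theorem~\ref{d:thm:1}.
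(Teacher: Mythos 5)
Your argument is correct. Note that the paper itself does not prove this proposition at all: it simply cites \cite[Proposition~2.7]{Hof14}, so you are supplying a self-contained derivation where the authors defer to a reference. Your route --- reducing to the Corollary of Theorem~\ref{d:thm:1} (which, for $\V$ a frame, asks for $a\cdot m_X=a\cdot Ta$) and then verifying the missing inequality $a\cdot m_X\le a\cdot Ta$ from the representable structure --- is sound. The chain $Ta\ge Ta_0\cdot T\alpha\ge T\alpha$ (using lax functoriality and $1_{TX}\le T1_X\le Ta_0$) gives $a\cdot Ta\ge a_0\cdot\alpha\cdot T\alpha$, and the step you flag as delicate does go through: the half $\alpha\cdot T\alpha\le\alpha\cdot m_X$ of the pseudo-algebra equivalence reads pointwise as $k\le a_0(\alpha(T\alpha(\fX)),\alpha(m_X(\fX)))$, and composing with transitivity of the $\V$-category $(X,a_0)$ yields $a_0\cdot\alpha\cdot m_X\le a_0\cdot\alpha\cdot T\alpha$, which is all that is needed (the full equality of relations is a pleasant by-product but not required). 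Together with axiom (T) this gives $a\cdot m_X=a\cdot Ta$, and the ``in particular'' clause follows from the Proposition of Section~\ref{sec:repr-tv-categ}. One presentational caution: the frame hypothesis is used only to invoke the Corollary, exactly as you say, but you should make explicit that $(X,a_0)$ with $a_0=a\cdot e_X$ is indeed a $\V$-category (reflexivity is (R); transitivity follows from the op-lax condition on $e$ together with (T)), since transitivity of $a_0$ is what powers your final step.
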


To treat the general case, we will make use of the following conditions:
\begin{assumptions}
  \label{d:ass:1}
  \begin{enumerate}
  \item\label{d:item:8} The diagram \eqref{infi} commutes, for all
    $\V$-relations $r:X\relto X'$ and $s:Y\relto Y'$.
  \item\label{inj} For all $u,v,w\in \V$,
    \begin{equation*}
      w\wedge (u\otimes v)=\bigvee\{u'\otimes v'\;|\;u'\leq u, v'\leq v,u'\otimes v'\leq w\};
    \end{equation*}
    or, equivalently, every injective $\V$-category is exponentiable: see
    \cite[Theorem~5.3]{HR13}.
  \item\label{tensor} For every $\V$-relation $a:X\relto Y$ and $u\in \V$,
    \begin{equation*}
      T(a\otimes u)=Ta\otimes u,
    \end{equation*}
    where $a\otimes u$ is the $\V$-relation defined by
    $(a\otimes u)(x,y)=a(x,y)\otimes u$.
  \item\label{fctors} The maps $\V\otimes \V\xrightarrow{\quad\otimes\quad}\V$
    and $X\xrightarrow{(-,u)}X\otimes \V$ are $\TV$-functors, for all $u\in V$.
  \end{enumerate}
\end{assumptions}

These morphisms induce an interesting action of $\V$ on every injective
$\TV$-category $(X,a)$ as follows.  The $\TV$-functor
\[
  \xymatrix{X^\op\otimes X\otimes \V\ar[rr]^-{a\otimes 1}&&\V\otimes
    \V\ar[rr]^-{\otimes}&&\V}
\]
induces a $\TV$-functor $\tilde{a}:X\otimes \V\to PX$. We denote the composite
\[
  \xymatrix{X\otimes \V\ar[rr]^-{\tilde{a}}&&PX\ar[rr]^-{\Sup_X}&&X}
\]
by $\oplus$, and
\[
  \xymatrix{X\ar[rr]^-{(-,u)} && X\otimes \V\ar[rr]^-{\tilde{a}} &&
    PX\ar[rr]^-{\Sup_X}&&X,}
\]
assigning to each $x\in X$ an element $x\oplus u$ in $X$, by $-\oplus u$.

Analogously we will write $\fx\oplus u$ for $T(-\oplus u)(\fx)$, for every
$\fx\in TX$ and $u\in \V$. Note that $\TV$-functoriality of $-\oplus u$ can be
written as
\[
  a(\fx,y)\leq a(\fx\oplus u,y\oplus u),
\]
for every $\fx\in TX$ and $y\in X$.

\begin{lemma}\label{calculus}
  Assuming \ref{d:ass:1}~\eqref{fctors}, for an injective $\TV$-category
  $(X,a)$, with $a=a_0\cdot\alpha$ as usual, the following holds, for every
  $x,y\in X$, $\fx\in TX$ and $u\in \V$:
  \begin{enumerate}
  \item\label{d:item:1} $a_0(x\oplus u,y)=\hom(u,a_0(x,y))$;
  \item\label{d:item:2} $a_0(x,y\oplus u)\geq a_0(x,y)\otimes u$;
  \item\label{d:item:3} $a(\fx\oplus u,y)\geq \hom(u,a(\fx,y))$;
  \item\label{d:item:4} $a(\fx,y\oplus u)\geq a(\fx,y)\otimes u$.
  \end{enumerate}
  Moreover, if, in addition, \ref{d:ass:1}~\eqref{tensor} holds, then, for every
  $\fX\in T^2X$, $\fy\in TX$, $u\in \V$,
  \begin{enumerate}[resume]
  \item\label{d:item:5} $Ta(\fX,\fy\oplus u)\geq Ta(\fX,\fy)\otimes u$.
  \end{enumerate}
\end{lemma}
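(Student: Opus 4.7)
The plan is to treat the five items in order, building up from identities for the underlying $\V$-category structure $a_0$ (items \eqref{d:item:1}, \eqref{d:item:2}), to identities for $a$ (items \eqref{d:item:3}, \eqref{d:item:4}), and finally to $Ta$ (item \eqref{d:item:5}).

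For \eqref{d:item:1}, I first unpack $\tilde{a}(x,u)\in PX$ as the presheaf $\fz\mapsto a(\fz,x)\otimes u$ on $X^\op$ (the $\otimes$-exponential mate of $\otimes\cdot(a\otimes 1)$), while $\yoneda_X(y)$ sends $\fz$ to $a(\fz,y)$. Since $x\oplus u=\Sup_X\tilde{a}(x,u)$ and $\Sup_X\dashv\yoneda_X$ by Theorem~\ref{d:thm:3}, the hom formula of the adjunction reads
\[
a_0(x\oplus u,y)=[PX]_0(\tilde{a}(x,u),\yoneda_X y)=\bigwedge_{\fz\in TX}\hom(a(\fz,x)\otimes u,a(\fz,y)).
\]
Hom-tensor adjointness pulls $u$ outside each $\hom$ and across the meet, and full faithfulness of $\yoneda_X$ then identifies the remaining expression with $\hom(u,a_0(x,y))$. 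For \eqref{d:item:2}, the unit of $\Sup_X\dashv\yoneda_X$ gives $\tilde{a}(y,u)\le\yoneda_X(y\oplus u)$ pointwise in $PX$; evaluating at $\fz=e_X(x)$ converts this directly to $a_0(x,y)\otimes u\le a_0(x,y\oplus u)$.

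Items \eqref{d:item:3} and \eqref{d:item:4} rely on $X$ being representable with pseudo-algebra $\alpha:TX\to X$ left adjoint to $e_X$ in $\Cats{\TV}$, whence $a=a_0\cdot\alpha$. A standard adjunction calculation shows that, for any $\TV$-endofunctor $f:X\to X$, one has $\alpha\cdot Tf\le f\cdot\alpha$: by $\alpha\dashv e_X$ this is equivalent to $Tf\le e_X\cdot f\cdot\alpha=Tf\cdot e_X\cdot\alpha$, which follows from the unit $1_{TX}\le e_X\cdot\alpha$. Taking $f=-\oplus u$ yields $k\le a_0(\alpha(\fx\oplus u),\alpha(\fx)\oplus u)$, and transitivity of $a_0$ combined with \eqref{d:item:1} gives
\[
a(\fx\oplus u,y)=a_0(\alpha(\fx\oplus u),y)\ge a_0(\alpha(\fx)\oplus u,y)=\hom(u,a(\fx,y)).
\]
Item \eqref{d:item:4} is immediate from \eqref{d:item:2} applied to $x:=\alpha(\fx)$ via $a(\fx,y)=a_0(\alpha(\fx),y)$.

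For \eqref{d:item:5}, let $F:=1_{TX}\times(-\oplus u):TX\times X\to TX\times X$. Given $\fw\in T(TX\times X)$ with $T\pi_{TX}(\fw)=\fX$ and $T\pi_X(\fw)=\fy$, the element $TF(\fw)$ projects to $(\fX,\fy\oplus u)$, so it is a valid witness for $Ta(\fX,\fy\oplus u)$ in the topological-theory formula, and $\xi\cdot Ta\cdot TF(\fw)=\xi\cdot T(a\cdot F)(\fw)$. Item \eqref{d:item:4} gives $a\cdot F\ge a\otimes u$ as set maps $TX\times X\to\V$; viewing these as $\V$-relations into a singleton and using 2-functoriality of the lax extension of $T$ to $\Rels{\V}$, this pointwise monotonicity transfers to $\xi\cdot T(a\cdot F)\ge\xi\cdot T(a\otimes u)$ as set maps on $T(TX\times X)$. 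Taking the supremum over admissible $\fw$ and invoking Assumptions~\ref{d:ass:1}~\eqref{tensor} to rewrite $T(a\otimes u)=Ta\otimes u$ then gives the inequality. The main obstacle is \eqref{d:item:3}: one must correctly orient the KZ adjunction inequality (which fixes whether $\alpha(\fx\oplus u)$ lies above or below $\alpha(\fx)\oplus u$ in $X$) so that, after inserting \eqref{d:item:1}, transitivity of $a_0$ points in the right direction.
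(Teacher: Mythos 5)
Your proof is correct and follows essentially the same route as the paper's: every item is reduced to the adjunction $\Sup_X\dashv\yoneda_X$, the decomposition $a=a_0\cdot\alpha$, and the key inequality $k\le a_0(\alpha(\fx\oplus u),\alpha(\fx)\oplus u)$, with \eqref{d:item:4} deduced from \eqref{d:item:2} and \eqref{d:item:5} from \eqref{d:item:4} via Assumption~\ref{d:ass:1}~\eqref{tensor}. The differences are only local micro-justifications (you obtain \eqref{d:item:2} from the unit $1_{PX}\le\yoneda_X\cdot\Sup_X$ and \eqref{d:item:3} from $\alpha\cdot Tf\le f\cdot\alpha$, where the paper instead invokes $\TV$-functoriality of $-\oplus u$ directly, and you spell out \eqref{d:item:5} which the paper leaves implicit), and all of these are sound.
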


\begin{proof}
  (\ref{d:item:1}) For every $x,y\in X$ and $u\in \V$,
  \begin{align*}
    a_0(x\oplus u,y)&=a_0(\Sup_X(\tilde{a}(x,u)),y)&&\text{(by definition of $\oplus$)}\\
                    &=[\tilde{a}(x,u),\yoneda_X(y)]&&\text{(because $\Sup_X\dashv \yoneda_X$)}\\
                    &=\bigwedge_{\fx\in TX}\hom(\tilde{a}(x,u)(\fx),\yoneda_X(y)(\fx))&&\text{(by definition of $[\; ,\;]$)}\\
                    &=\bigwedge_{\fx\in TX}\hom(a(\fx,x)\otimes u,a(\fx,y))&& \text{(by definition of $\tilde{a}$ and $\yoneda_X(y)$)}\\
                    &=\hom(u,a_0(x,y)),
  \end{align*}
  because, using the fact that $a=a_0\cdot\alpha$ and
  \[
    a_0(\alpha(\fx),x)\otimes u\otimes\hom(u,a_0(x,y))\leq
    a_0(\alpha(\fx),x)\otimes a_0(x,y)\leq a_0(\alpha(\fx),y),
  \]
  for $\fx\in TX$, we can conclude that
  \[
    \hom(u,a_0(x,y))\leq\bigwedge_{\fx\in TX}\hom(a_0(\alpha(\fx),x)\otimes
    u,a_0(\alpha(\fx),y)).
  \]
  Taking $\fx=e_X(x)$, we see that this inequality is in fact an equality as
  claimed.

  (\ref{d:item:2}) Since, by hypothesis, $-\oplus u$ is a $\TV$-functor, and so,
  in particular, a $\V$-functor $(X,a_0)\to (X,a_0)$,
  \[
    a_0(x,y)\leq a_0(x\oplus u,y\oplus u)=\hom(u,a_0(x,y\oplus u)),
  \]
  and then
  \[
    a_0(x,y)\otimes u\leq \hom(u,a_0(x,y\oplus u))\otimes u\leq a_0(x,y\oplus
    u).
  \]

  (\ref{d:item:3}) One has
  \[
    \begin{array}{rcl}
      k&\leq&a_0(\alpha(\fx),\alpha(\fx))=a(\fx,\alpha(\fx))\\
       &\leq&a(\fx\oplus u,\alpha(\fx)\oplus u)\\
       &=&a_0(\alpha(\fx\oplus u),\alpha(\fx)\oplus u).
    \end{array}
  \]
  Using (\ref{d:item:1}) we conclude that
  \[
    \begin{array}{rcl}
      \hom(u,a(\fx,y))&=&a_0(\alpha(\fx)\oplus u,y)\\
                      &\leq&a_0(\alpha(\fx\oplus u),\alpha(\fx)\oplus u)\otimes a_0(\alpha(\fx)\oplus u,y)\\
                      &\leq&a_0(\alpha(\fx\oplus u),y)=a(\fx\oplus u,y).
    \end{array}
  \]

  (\ref{d:item:4}) follows directly from (\ref{d:item:2}), while
  (\ref{d:item:5}) follows from (\ref{d:item:4}).
\end{proof}

\begin{lemma}
  \label{d:lem:1}
  Let $\varphi:\V\to \W$ be a surjective quantale homomorphism; that is,
  $\varphi$ preserves the tensor, the neutral element, and suprema. Then, if
  $\V$ satisfies condition \ref{d:ass:1}~\eqref{inj}, so does $\W$.
\end{lemma}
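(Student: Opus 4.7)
The plan is to lift the defining identity from $\V$ to $\W$ via preimages under $\varphi$. The $\geq$ direction in $\W$ is immediate from the monotonicity of $\otimes$: every summand $u'\otimes v'$ on the right-hand side is bounded above by $u\otimes v$ and by $w$, hence by their meet.

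For the non-trivial direction, I would invoke the right adjoint $\varphi^{*}\colon\W\to\V$ of $\varphi$, which exists by the adjoint functor theorem since $\varphi$ preserves suprema and $\V$ is complete; surjectivity of $\varphi$ gives $\varphi\circ\varphi^{*}=1_{\W}$, and as a right adjoint $\varphi^{*}$ preserves meets. Given $u,v,w\in\W$, I would set $\tilde u=\varphi^{*}(u)$, $\tilde v=\varphi^{*}(v)$, $\tilde w=\varphi^{*}(w)$, apply \eqref{inj} in $\V$ to the triple $(\tilde u,\tilde v,\tilde w)$, and then push the equation forward by $\varphi$. Since $\varphi$ preserves tensor and joins, the right-hand side becomes
\[
\bigvee\{\varphi(\tilde u')\otimes\varphi(\tilde v')\mid \tilde u'\leq\tilde u,\ \tilde v'\leq\tilde v,\ \tilde u'\otimes\tilde v'\leq\tilde w\},
\]
whose summands all lie in the $\W$-index set $S=\{u'\otimes v'\mid u'\leq u,\ v'\leq v,\ u'\otimes v'\leq w\}$. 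A quick check, using that $(\varphi^{*}(u'),\varphi^{*}(v'))$ satisfies the $\V$-side constraints whenever $(u',v')$ satisfies the $\W$-side ones, shows the image hits every element of $S$, giving $\varphi\bigl(\tilde w\wedge(\tilde u\otimes\tilde v)\bigr)=\bigvee S$.

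The remaining step is to identify this with $w\wedge(u\otimes v)$. The inequality $\varphi(\tilde w\wedge(\tilde u\otimes\tilde v))\leq w\wedge(u\otimes v)$ is automatic from $\varphi(a\wedge b)\leq\varphi(a)\wedge\varphi(b)$ combined with $\varphi\varphi^{*}=1_{\W}$. I expect the main obstacle to be the reverse inequality: in general $\tilde u\otimes\tilde v=\varphi^{*}(u)\otimes\varphi^{*}(v)$ is strictly below $\varphi^{*}(u\otimes v)$, so $\varphi$ need not preserve the meet $\tilde w\wedge(\tilde u\otimes\tilde v)$ on the nose. To bridge this gap, I would apply \eqref{inj} in $\V$ a second time to a triple containing a preimage $\tilde c$ of $w\wedge(u\otimes v)$ obtained via meet-preservation of $\varphi^{*}$ (namely $\varphi^{*}(w)\wedge\varphi^{*}(u\otimes v)$, which satisfies $\varphi(\tilde c)=w\wedge(u\otimes v)$ exactly); using the Heyting distributivity of $\V$ together with the distribution of $\otimes$ over joins, one rewrites the resulting decomposition so that every summand, after applying $\varphi$, lies in $S$, yielding $w\wedge(u\otimes v)\leq\bigvee S$.
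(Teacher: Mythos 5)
The paper states this lemma without proof, so there is no argument of its own to compare yours against; judged on its own terms, your proposal is correct exactly up to the point you flag, and cannot be completed past it. The positive part: the reduction is right, and your ``quick check'' does go through --- for a $\W$-side pair $(u',v')$ with $u'\le u$, $v'\le v$, $u'\otimes v'\le w$, the pair $(\varphi^{*}(u'),\varphi^{*}(v'))$ satisfies the $\V$-side constraints and maps back onto $u'\otimes v'$ --- so indeed $\varphi\bigl(\tilde w\wedge(\tilde u\otimes\tilde v)\bigr)=\bigvee S$, and the lemma is equivalent to the inequality $w\wedge(u\otimes v)\le\varphi\bigl(\tilde w\wedge(\tilde u\otimes\tilde v)\bigr)$ that you postpone to the last paragraph. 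That paragraph, however, is not a proof: no triple is exhibited and no computation is carried out. Worse, no such computation can exist, because under the stated hypotheses the conclusion is false. For \emph{any} commutative quantale $\W$, the powerset $\mathcal{P}M$ of the underlying monoid $M=(\W,\otimes,k)$, with elementwise multiplication, is a commutative quantale (and a Boolean, hence Heyting, algebra) satisfying \ref{d:ass:1}~\eqref{inj}: given $s\in C\cap(A\cdot B)$, write $s=xy$ with $x\in A$, $y\in B$ and take $A'=\{x\}$, $B'=\{y\}$. Moreover $\bigvee\colon\mathcal{P}M\to\W$ preserves suprema, the tensor (by quantale distributivity) and the unit, and is surjective on singletons. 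So the lemma as stated would make \eqref{inj} automatic for every quantale, which it is not: on the four-element chain $0<a<b<1$ with unit $1$, $b\otimes b=b$ and $a\otimes a=a\otimes b=0$, one checks associativity and monotonicity directly, yet $a\wedge(b\otimes b)=a$ while every $u'\otimes v'$ with $u'\le b$, $v'\le b$, $u'\otimes v'\le a$ equals $0$.

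The failure sits exactly where you predicted. In that example $\tilde u=\tilde v=\{0,a,b\}$, $\tilde w=\{0,a\}$, $\tilde u\otimes\tilde v=\{0,b\}$, so $\varphi\bigl(\tilde w\wedge(\tilde u\otimes\tilde v)\bigr)=\varphi(\{0\})=0$ although $w\wedge(u\otimes v)=a$; since $\bigvee S=0$, no second application of \eqref{inj} in $\V$, for any choice of preimages, and no amount of Heyting distributivity can express $a$ as a join of elements of $S$. What rescues both the statement and your first computation is the additional hypothesis that $\varphi$ preserve binary infima: then $\varphi\bigl(\tilde w\wedge(\tilde u\otimes\tilde v)\bigr)=\varphi(\tilde w)\wedge\varphi(\tilde u\otimes\tilde v)=w\wedge(u\otimes v)$ and you are immediately done. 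This extra hypothesis holds in the one place the paper invokes the lemma, namely the truncation $[0,\infty]\to[0,1]$, $u\mapsto\min(1,u)$, which preserves all infima and suprema; you should either add that assumption to the statement or restrict the lemma to that situation.
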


\begin{theorem}
  \label{d:thm:2}
  Under Assumptions~\ref{d:ass:1}, every injective $\TV$-category is
  exponentiable in $\Cats{\TV}$.
\end{theorem}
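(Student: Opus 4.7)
The plan is to apply Theorem~\ref{d:thm:1} to an injective $\TV$-category $(X,a)$. Assumption~\ref{d:ass:1}\eqref{d:item:8} supplies the commutativity of diagram~\eqref{infi} needed by that theorem, so the whole task reduces to verifying inequality~\eqref{d:eq:1}. Since every injective $\TV$-category is representable (by the proposition of the previous section), I fix a left adjoint $\alpha\colon TX\to X$ of $e_X$, giving the decomposition $a=a_0\cdot\alpha$ and the pseudo-algebra identity $\alpha\cdot T\alpha\simeq\alpha\cdot m_X$; Assumption~\ref{d:ass:1}\eqref{fctors} then provides the $\V$-action $\oplus$ on $X$ together with the calculus recorded in Lemma~\ref{calculus}.

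Fix $\fX\in TTX$, $x\in X$ and $u,v\in\V$. The first move is to invoke Assumption~\ref{d:ass:1}\eqref{inj} to rewrite
\[
a(m_X(\fX),x)\wedge(u\otimes v)=\bigvee\{u'\otimes v'\mid u'\le u,\;v'\le v,\;u'\otimes v'\le a(m_X(\fX),x)\},
\]
so that it is enough, for each admissible pair $(u',v')$, to exhibit a single $\fx\in TX$ such that $Ta(\fX,\fx)\wedge u\ge u'$ and $a(\fx,x)\wedge v\ge v'$.

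My witness is $\fx:=T\alpha(\fX)\oplus u'$. For the bound on $a(\fx,x)$, Lemma~\ref{calculus}\eqref{d:item:3} yields $a(T\alpha(\fX)\oplus u',x)\ge\hom(u',a(T\alpha(\fX),x))$; the decomposition $a=a_0\cdot\alpha$ combined with $\alpha\cdot T\alpha\simeq\alpha\cdot m_X$ collapses $a(T\alpha(\fX),x)$ into $a(m_X(\fX),x)$, and transposing $u'\otimes v'\le a(m_X(\fX),x)$ then delivers $v'\le a(\fx,x)$; since $v'\le v$, this gives $a(\fx,x)\wedge v\ge v'$. For the bound on $Ta(\fX,\fx)$, Lemma~\ref{calculus}\eqref{d:item:5} gives $Ta(\fX,\fx)\ge Ta(\fX,T\alpha(\fX))\otimes u'$, reducing everything to proving $Ta(\fX,T\alpha(\fX))\ge k$.

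This last reflexivity-type identity is, as I see it, the only genuinely delicate step. My plan is to view the function $\alpha\colon TX\to X$ as a $\V$-relation and observe that it is dominated by $a$: for every $\fy\in TX$, $a(\fy,\alpha(\fy))=a_0(\alpha(\fy),\alpha(\fy))\ge k$, so $\alpha\le a$ in $\Rels{\V}(TX,X)$. Because the lax extension $T\colon\Rels{\V}\to\Rels{\V}$ coming from the strict topological theory is monotone and coincides with the $\Set$-functor $T$ on function graphs, this lifts to $T\alpha\le Ta$ in $\Rels{\V}(TTX,TX)$; evaluating at $(\fX,T\alpha(\fX))$—where the left-hand side is $k$—produces the desired $k\le Ta(\fX,T\alpha(\fX))$. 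Combining the two bounds yields $(Ta(\fX,\fx)\wedge u)\otimes(a(\fx,x)\wedge v)\ge u'\otimes v'$; taking joins over admissible $(u',v')$ establishes~\eqref{d:eq:1}, and Theorem~\ref{d:thm:1} concludes that $(X,a)$ is exponentiable in $\Cats{\TV}$.
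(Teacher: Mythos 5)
Your proof is correct and takes essentially the same route as the paper's: the same reduction to inequality~\eqref{d:eq:1} of Theorem~\ref{d:thm:1}, the same use of Assumption~\ref{d:ass:1}~\eqref{inj}, and the same witness $\fx=T\alpha(\fX)\oplus u'$ handled via Lemma~\ref{calculus}~(\ref{d:item:3}) and~(\ref{d:item:5}). The only (harmless) local variation is your justification of $k\le Ta(\fX,T\alpha(\fX))$ by monotonicity of the lax extension applied to $\alpha\le a$, where the paper instead computes $Ta(\fX,T\alpha(\fX))=Ta_0(T\alpha(\fX),T\alpha(\fX))\ge k$ using $Ta=Ta_0\cdot T\alpha$ and reflexivity of $Ta_0$.
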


\begin{proof}
  Let $\fX\in T^2X$, $x\in X$ and $u,v\in \V$. In order to conclude that
  \[
    \bigvee_{\fx\in TX}(Ta(\fX,\fx)\wedge u)\otimes(a(\fx,x)\wedge v)\geq
    a(m_X(\fX),x)\wedge(u\otimes v),
  \]
  we make use of Hypothesis~\ref{d:ass:1}~\eqref{inj}. Let $u',v'\in\V$ with
  $u'\le u$, $v'\le v$ and $u'\otimes v'\le a(m_X(\fX),x)$. First we note that
  \begin{align*}
    Ta(\fX,T\alpha(\fX)\oplus u')\wedge u
    &\geq (Ta(\fX,T\alpha(\fX))\otimes u')\wedge u & \text{(by \ref{calculus}~(\ref{d:item:5}))}\\
    &=(Ta_0(T\alpha(\fX),T\alpha(\fX))\otimes u')\wedge u\\
    &\geq(k\otimes u')\wedge u=u',
  \end{align*}
  and
  \begin{align*}
    a(T\alpha(\fX)\oplus u',x)
    &\geq\hom(u',a(T\alpha(\fX),x)) & \text{(by \ref{calculus}~(\ref{d:item:3}))}\\
    &=\hom(u',a_0(\alpha(T\alpha(\fX)),x))\\
    &=\hom(u',a_0(\alpha(m_X(\fX)),x))\\
    &=\hom(u',a(m_X(\fX),x)).
  \end{align*}
  Now, from $u'\otimes v'\leq a(m_X(\fX),x)$ and $v'\le v$ we get
  \[
    v'\leq\hom(u',a(m_X(\fX),x))\wedge v\le a(T\alpha(\fX)\oplus u',x)\wedge v,
  \]
  hence
  \[
    u'\otimes v'\leq (Ta(\fX,T\alpha(\fX)\oplus u')\wedge
    u)\otimes(a(T\alpha(\fX)\oplus u',x)\wedge v).
  \]
  Therefore
  $\displaystyle{a(m_X(\fX),x)\wedge(u\otimes v)\le \bigvee_{\fx\in
      TX}(Ta(\fX,\fx)\wedge u)\otimes(a(\fx,x)\wedge v)}$.
\end{proof}

\begin{remark}
  Under Assumptions~\ref{d:ass:1}, it follows from Lemma~\ref{d:lem:2} that the
  exponential $\langle (X,a),(Y,b)\rangle$ is separated, for all separated
  injective $\TV$-categories $(X,a)$ and $(Y,b)$. In fact, with
  $a=a_0\cdot\alpha$, the epimorphism $(X,\alpha)\to(X,a)$ in $\Cats{\TV}$ is
  mapped to the monomorphism
  \begin{equation*}
    \langle (X,a), (Y,b)\rangle
    \quad\xrightarrow{\qquad}\quad \langle (X,\alpha),(Y,b)\rangle=(Y,b)^{(X,\alpha)},
  \end{equation*}
  which proves that $ \langle (X,a), (Y,b)\rangle$ is separated.
\end{remark}

\section{$\Cats{\TV}$ is weakly cartesian closed}
\label{sec:catstv-weakly-cart}

Building on the results of the previous section, in this section we show that,
under some conditions, $\Cats{\TV}$ is weakly cartesian closed. We start by
proving this property for the full subcategory $\Cats{\TV}_\sep$ of $\Cats{\TV}$
of separated $\TV$-categories.

\begin{theorem}
  Under Assumptions~\ref{d:ass:1}, $\Cats{\TV}_\sep$ is weakly cartesian closed.
\end{theorem}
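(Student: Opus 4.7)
My plan is to adapt Rosick\'y's argument for $\Top_0$ from \cite{Ros99}, with the Yoneda embedding of Theorem~\ref{d:thm:3} playing the role of injective hull and Theorem~\ref{d:thm:2} supplying exponentiability. Fix separated $\TV$-categories $A$ and $B$. By Theorem~\ref{d:thm:3}, Lemma~\ref{d:lem:2} and the Corollary following it, the Yoneda maps $\yoneda_A:A\to\tilde A:=PA$ and $\yoneda_B:B\to\tilde B:=PB$ are fully faithful embeddings into separated injective $\TV$-categories; because $A$ and $B$ are separated they are also injective on underlying sets, so I identify $A\subseteq\tilde A$ and $B\subseteq\tilde B$. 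By Theorem~\ref{d:thm:2}, $\tilde A$ is exponentiable in $\Cats{\TV}$, hence the cartesian exponential $\tilde B^{\tilde A}$ exists, and by the Remark following that theorem it is separated.

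I take $\langle A,B\rangle$ to be the subspace of $\tilde B^{\tilde A}$ on the set $\{h:\tilde A\to\tilde B\mid h(A)\subseteq B\}$ with the initial $\TV$-structure. As a subobject of a separated $\TV$-category it is separated. The canonical evaluation $\ev:\tilde B^{\tilde A}\times\tilde A\to\tilde B$, precomposed with the subspace embedding $\langle A,B\rangle\times A\hookrightarrow\tilde B^{\tilde A}\times\tilde A$, takes values in $B$; since $\yoneda_B$ is initial this restricts to a $\TV$-functor $\ev:\langle A,B\rangle\times A\to B$, our candidate weak evaluation.

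To check the universal property, let $f:C\times A\to B$ in $\Cats{\TV}_\sep$. Since $\Cats{\TV}$ is topological over $\Set$, products of fully faithful morphisms are fully faithful, so $1_C\times\yoneda_A:C\times A\to C\times\tilde A$ is fully faithful. Injectivity of $\tilde B$ then extends $\yoneda_B\cdot f$ along $1_C\times\yoneda_A$ to a $\TV$-functor $h:C\times\tilde A\to\tilde B$ with $h\cdot(1_C\times\yoneda_A)\simeq\yoneda_B\cdot f$, which is an equality because $\tilde B$ is separated. Exponentiability of $\tilde A$ transposes $h$ to $\bar f:C\to\tilde B^{\tilde A}$ with $\ev\cdot(\bar f\times 1_{\tilde A})=h$. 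Since $h(c,a)=\yoneda_B(f(c,a))\in B$ for all $c\in C$ and $a\in A$, the map $\bar f$ lands set-theoretically in $\langle A,B\rangle$, and the initial structure on $\langle A,B\rangle$ turns this into a $\TV$-functor $\bar f:C\to\langle A,B\rangle$. Restricting the identity $\ev\cdot(\bar f\times 1_{\tilde A})=h$ along $1_C\times\yoneda_A$ and using that $\yoneda_B$ is a monomorphism yields $\ev\cdot(\bar f\times 1_A)=f$, as required.

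The main obstacle is the layered bookkeeping across the embeddings: verifying that $1_C\times\yoneda_A$ is fully faithful so that injectivity of $\tilde B$ applies, that $\langle A,B\rangle$ with its initial structure carries the restricted evaluation as a genuine $\TV$-functor (requiring $\yoneda_B$ to be initial), and that separation allows each 2-cell equivalence $\simeq$ to be upgraded to an equality. Once these checks are settled, the universal property reduces to a direct transpose-and-restrict computation in the spirit of \cite{Ros99}.
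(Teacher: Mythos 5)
Your proposal is correct and follows essentially the same route as the paper: embed $A$ and $B$ into the injective (hence, by Theorem~\ref{d:thm:2}, exponentiable) $\TV$-categories $PA$ and $PB$ via Yoneda, take the subobject of the exponential $\langle PA,PB\rangle$ consisting of those maps carrying $\yoneda_A(A)$ into $\yoneda_B(B)$ with the initial structure, and obtain the weak universal property by extending $\yoneda_B\cdot f$ along $1_C\times\yoneda_A$ using injectivity of $PB$ and then transposing. Your explicit remarks on upgrading $\simeq$ to $=$ via separatedness and on initiality of $\yoneda_B$ are exactly the bookkeeping the paper leaves implicit.
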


\begin{proof}
  For $X,Y$ separated $\TV$-categories, consider the Yoneda embeddings
  $\yoneda_X:X\to PX$ and $\yoneda_Y:Y\to PY$, and the exponential
  $\langle PX,PY\rangle$. The elements of its underlying set can be identified
  with $\TV$-functors $E\times PX\to PY$ (where $E=(1,e_1^\circ)$ is the
  generator of $\Cats{\TV}$), and the universal morphism
  $\ev:\langle PX,PY\rangle\times PX\to PY$ with the evaluation map:
  $\ev(\varphi,\fx)=\varphi(\fx)$ (where, for simplicity, we identify the set
  $E\times PX$ with $PX$). We can therefore consider
  \[
    \ll X,Y\gg=\{\varphi:E\times PX\to PY\mid
    \varphi(\yoneda_X(X))\subseteq\yoneda_Y(Y)\},
  \]
  with the initial structure with respect to the inclusion
  $\iota:\ll X,Y\gg\to \langle PX,PY\rangle$. Moreover, the morphism
  \[
    \xymatrix{\ll X,Y\gg\times X\ar[rr]^-{\iota\times\yoneda_X} && \langle
      PX,PY\rangle\times PX\ar[rr]^-{\ev} && PY}
  \]
  factors through $\yoneda_Y$ via a morphism
  \[
    \xymatrix{\ll X,Y\gg\times X\ar[rr]^-{\widetilde{\ev}}&&Y.}
  \]
  Next we show that this is a weak exponential in $\Cats{\TV}_\sep$.

  Given any separated $\TV$-category $Z$, and a $\TV$-functor
  $f:Z\times X\to Y$, by injectivity of $PY$ there exists a $\TV$-functor
  $f':Z\times PX\to PY$ making the square below commute. Then, by universality
  of the evaluation map $\ev$, there exists a unique $\TV$-functor
  $\overline{f}:Z\to \langle PX,PY\rangle$ making the bottom triangle commute.
  \[
    \xymatrix{Z\times X\ar[r]^f\ar[d]_{1_Z\times\yoneda_X}&Y\ar[d]^{\yoneda_Y}\\
      Z\times PX\ar[r]^{f'}\ar[d]_{\overline{f}\times 1_{PX}}&PY\\
      \langle PX,PY\rangle\times PX\ar[ru]_{\ev}&}
  \]
  The map $\overline{f}:Z\to \langle PX,PY\rangle$, assigning to each $z\in Z$ a
  map $\overline{f}(z):PX\to PY$, is such that
  $\overline{f}(z)(\yoneda_X(x))
  =\ev(\overline{f}(z),\yoneda_X(x))=\yoneda_Y(f(z,x))$; that is,
  $\overline{f}(z)(\yoneda_X(X))\subseteq\yoneda_Y(Y)$, and this means that
  $\overline{f}(z)\in\ll X,Y\gg$. Hence we can consider the corestriction
  $\tilde{f}$ of $\overline{f}$ to $\ll X,Y\gg$, which is again a $\TV$-functor
  since $\ll X,Y\gg$ has the initial structure with respect to
  $\langle PX,PY\rangle$, so that the following diagram commutes.
  \[
    \xymatrix{\ll X,Y\gg\times X\ar[rr]^-{\widetilde{\ev}}&&Y\\
      Z\times X\ar[rru]_f\ar[u]^{\tilde{f}\times 1_X}&}
  \]
\end{proof}

In order to show that $\Cats{\TV}$ is weakly cartesian closed, we follow the
proof of \cite{Ros99}. Hence, first we show that:
\begin{proposition}
  The reflector $R:\Cats{\TV}\to\Cats{\TV}_\sep$ preserves finite products.
\end{proposition}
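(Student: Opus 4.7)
The plan is to exhibit $q_X\times q_Y : X\times Y\to RX\times RY$ as a reflection of $X\times Y$ into $\Cats{\TV}_\sep$, which together with the trivial preservation of the terminal object ($1$ is separated, hence $R(1)\cong 1$) yields preservation of finite products.

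First, $\Cats{\TV}_\sep$ is closed under binary products in $\Cats{\TV}$: if $f\simeq g\colon Z\to RX\times RY$, then postcomposition with each projection combined with separation of $RX$ and $RY$ forces $f=g$. Next, the separation equivalence on $X\times Y$ is componentwise: using the naturality $T\pi_X\cdot e_{X\times Y}=e_X\cdot\pi_X$, the underlying $\V$-category of the product satisfies $c_0((x,y),(x',y'))=a_0(x,x')\wedge b_0(y,y')$, so $(x,y)\simeq(x',y')$ iff $x\simeq x'$ and $y\simeq y'$. Consequently, the canonical comparison $\kappa\colon R(X\times Y)\to RX\times RY$, obtained by factoring $q_X\times q_Y$ through $q_{X\times Y}$ (possible since $RX\times RY$ is separated), is bijective on underlying sets.

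The core step is showing that $\kappa$ is a $\TV$-iso; equivalently, that given any $\TV$-functor $g\colon X\times Y\to Z$ with $Z$ separated, the set factorization $\tilde g([x],[y]):=g(x,y)$ (well-defined by the previous paragraph) is a $\TV$-functor. Granted the formula $a^*(Tq_X(\fx),q_X(x))=a(\fx,x)$ for the reflection structure (and analogously for $b^*$), the argument runs as follows: since $T$ preserves surjections on $\Set$, every $\fz\in T(RX\times RY)$ admits a preimage $\fw\in T(X\times Y)$ under $T(q_X\times q_Y)$; naturality gives $T\pi_{RX}(\fz)=Tq_X(T\pi_X(\fw))$ and analogously in $Y$, so the product structure on $RX\times RY$ at $(\fz,([x],[y]))$ equals $a(T\pi_X(\fw),x)\wedge b(T\pi_Y(\fw),y)$; $\TV$-functoriality of $g$ applied at $(\fw,(x,y))$ then supplies the required bound on $T\tilde g(\fz)$ and $\tilde g([x],[y])$.

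The main obstacle is establishing the invariance $a(\fx,x)=a(\fx',x')$ whenever $Tq_X(\fx)=Tq_X(\fx')$ and $x\simeq x'$, which makes the reflection-structure supremum over preimages collapse to a single well-defined value. The $x$-variable part follows from transitivity of $a$ applied at $\fX:=e_{TX}(\fx)$ (so $m_X(\fX)=\fx$) together with the reflexivity witness $k\le a_0(x,x')$: this yields $a(\fx,x)\le a(\fx,x')$, with the reverse by symmetry. The $\fx$-variable part is more delicate; I expect it to follow from the (BC) property of $\mT$ and preservation of weak pullbacks, which identify the kernel of $Tq_X$ with the $T$-image of $\simeq_X$, together with the standard module compatibility of $a$ with the underlying $\V$-category.
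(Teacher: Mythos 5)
Your argument is essentially the paper's proof in different packaging: both reduce the statement to (i) the componentwise description of $\sim$ on $X\times Y$, (ii) surjectivity of $T(q_X\times q_Y)$ via choice, and (iii) the identity $\tilde{a}(Tq_X(\fx),[x])=a(\fx,x)$ for the reflection structure, i.e.\ the fact that the quotient map is simultaneously final and initial; you verify the universal property of $q_X\times q_Y$ directly where the paper checks that the bijective comparison map is initial, but the computation is the same. The one step you leave open --- invariance of $a(\fx,x)$ in the $\fx$-variable --- is exactly the point the paper does not prove either, outsourcing initiality of the quotient map to \cite{HST14}; it does close, and more directly than your proposed route through (BC) and weak pullbacks. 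Write $s=q_X^{\circ}\cdot q_X\colon X\relto X$ for $\sim$ viewed as a $\V$-relation. Then $s\le a_0$ by definition of $\sim$, hence $Ts\le Ta_0=Ta\cdot Te_X$ (the last equality holding for the $\xi$-induced extension), while lax functoriality together with $T(r^{\circ})=(Tr)^{\circ}$ gives $Ts=T(q_X^{\circ}\cdot q_X)\ge (Tq_X)^{\circ}\cdot Tq_X$; so $Tq_X(\fx)=Tq_X(\fx')$ forces $k\le Ta_0(\fx',\fx)$. Since transitivity yields $a\cdot Ta_0=a\cdot Ta\cdot Te_X\le a\cdot m_X\cdot Te_X=a$, one gets $a(\fx,x)\le Ta_0(\fx',\fx)\otimes a(\fx,x)\le a(\fx',x)$, and equality by symmetry. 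With that supplied, your proof is complete and buys nothing beyond the paper's, except that phrasing the statement via the universal property of $q_X\times q_Y$ makes the closure of $\Cats{\TV}_\sep$ under products explicit, which the paper leaves implicit.
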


\begin{proof}
  We recall that, for any $\TV$-category $(X,a)$,
  $R(X,a)=(\tilde{X},\tilde{a})$, with $\tilde{X}=X/\sim$, where $x\sim y$ if
  $k\leq a(e_X(x),y)\wedge a(e_X(y), x)$, and
  $\tilde{a}=\eta_X\cdot a\cdot(T\eta_X)^\circ$, with $\eta_X:X\to\tilde{X}$ the
  projection. This structure makes $\eta_X$ both an initial and a final morphism
  (see \cite{HST14} for details).

  Let $f:R(X\times Y)\to RX\times RY$ be the unique morphism such that
  $f\cdot\eta_{X\times Y}=\eta_X\times\eta_Y$.
  \[
    \xymatrix{(X\times Y,c)\ar[rrd]_{\eta_X\times\eta_Y}\ar[rr]^-{\eta_{X\times Y}}&&(R(X\times Y),\tilde{c})\ar[d]^f\\
      &&(RX\times RY,d)}
  \]
  From $c(e_{X\times Y}(x,y),(x',y'))=a(e_X(x),x')\wedge b(e_Y(y),y')$ it is
  immediate that $(x,y)\sim (x',y')$ in $X\times Y$ if, and only if, $x\sim x'$
  in $X$ and $y\sim y'$ in $Y$. Therefore, $f$ is a bijection. Assuming the
  Axiom of Choice, so that $T$ preserves surjections, we have, for every
  $\fz\in T(R(X\times Y))$, $(x,y)\in X\times Y$,
  \[
    \begin{array}{rcll}
      \tilde{c}(\fz,[(x,y)])&=&c(\fw,(x,y))&\text{(for any $\fw\in(T\eta_{X\times Y})^{-1}(\fz)$)}\\
                            &=&d(T(\eta_X\times\eta_Y)(\fw),([x],[y]))&\text{(because $\eta_X\times\eta_Y$ is initial)}\\
                            &=&d(Tf(\fz),([x],[y]);
    \end{array}
  \]
  that is, $f$ is initial and therefore an isomorphism.
\end{proof}

\begin{theorem}\label{th:wcc}
  Under Assumptions~\ref{d:ass:1}, $\Cats{\TV}$ is weakly cartesian closed.
\end{theorem}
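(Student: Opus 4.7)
The plan is to adapt Rosick\'y's scheme from \cite{Ros99}, exploiting the weak cartesian closedness of $\Cats{\TV}_\sep$ established above together with the product-preservation of the reflector $R$. The weak exponential in $\Cats{\TV}$ will be built as a set of pairs fibered over the weak exponential in $\Cats{\TV}_\sep$.

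Given $\TV$-categories $X$ and $Y$, let $H = \ll RX, RY\gg$ denote the weak exponential in $\Cats{\TV}_\sep$ with evaluation $\widetilde{\ev}\colon H \times RX \to RY$. I would define the underlying set
\[
[X,Y] := \{(h,g) \mid h \in H,\ g\colon X \to Y \text{ a $\TV$-functor},\ \eta_Y \cdot g = h \cdot \eta_X\},
\]
and equip it with the initial $\TV$-category structure along the projection $\pi_1\colon [X,Y] \to H$, $(h,g) \mapsto h$, which exists because $\Cats{\TV} \to \Set$ is topological. The candidate evaluation is $\ev\colon [X,Y] \times X \to Y$, $((h,g),x) \mapsto g(x)$.

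To verify that $\ev$ is a $\TV$-functor, I would compute $\eta_Y \cdot \ev = \widetilde{\ev} \cdot (\pi_1 \times \eta_X)$, which is a composite of $\TV$-functors ($\pi_1$ is one by the initial structure, $\eta_X$ is the reflection unit, and $\widetilde{\ev}$ is the weak evaluation). Since $\eta_Y\colon Y \to RY$ is initial — a fact already recorded in the proof that $R$ preserves finite products — it follows that $\ev$ itself is a $\TV$-functor. For the weak universal property, given $f\colon Z \times X \to Y$ in $\Cats{\TV}$, I would pass to the reflection: product-preservation of $R$ yields $Rf\colon RZ \times RX \to RY$ with $Rf \cdot (\eta_Z \times \eta_X) = \eta_Y \cdot f$, and weak cartesian closedness of $\Cats{\TV}_\sep$ produces $\bar f\colon RZ \to H$ with $\widetilde{\ev} \cdot (\bar f \times 1_{RX}) = Rf$. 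Then define $\hat f\colon Z \to [X,Y]$ by $\hat f(z) = (\bar f(\eta_Z(z)),\, f(z,-))$. The required identity $\eta_Y \cdot f(z,-) = \bar f(\eta_Z(z)) \cdot \eta_X$ follows instantly from $Rf \cdot (\eta_Z \times \eta_X) = \eta_Y \cdot f$, and $f(z,-)\colon X \to Y$ is a $\TV$-functor as the composite of $f$ with the $\TV$-functor $X \cong 1 \times X \xrightarrow{z \times 1_X} Z \times X$. The morphism $\hat f$ is a $\TV$-functor by initiality of the structure on $[X,Y]$, since $\pi_1 \cdot \hat f = \bar f \cdot \eta_Z$ is a composite of $\TV$-functors; finally, $\ev \cdot (\hat f \times 1_X) = f$ holds by construction.

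The main technical obstacle is the verification that $\ev$ is a $\TV$-functor, which hinges critically on $\eta_Y$ being \emph{initial} and not merely surjective. Everything else amounts to transporting the weak exponential structure from $\Cats{\TV}_\sep$ along the reflection, made possible by $R$ preserving finite products. Note that uniqueness of $\hat f$ is not expected: different choices of $\bar f$ from the weak cartesian closedness of $\Cats{\TV}_\sep$ yield different $\hat f$'s, as is proper for a \emph{weak} exponential.
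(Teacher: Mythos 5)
Your overall strategy is sound and genuinely different from the paper's: instead of verifying a cosolution-set condition and taking a coproduct of bounded quotients $Z_f$ of the domains $Z$ (which is what the paper does), you build a single explicit weak exponential $[X,Y]$ fibred over $\ll RX,RY\gg$. Both arguments rest on the same ingredients --- weak cartesian closedness of $\Cats{\TV}_\sep$, product preservation of $R$, and initiality of $\eta_Y$ --- and your descent of $\ev$ along $\eta_Y$ is exactly the trick the paper uses for its map $\hat f$. Your construction avoids the cardinality bound and the appeal to extensivity.

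There is, however, one genuine gap: the claim that $f(z,-)\colon X\to Y$ is a $\TV$-functor. You justify it by composing $f$ with $X\cong 1\times X\xrightarrow{z\times 1_X}Z\times X$, but the point $z\colon 1\to Z$ out of the terminal $\TV$-category is a $\TV$-functor only when $\top\le c(Tz(\fp),z)$ for \emph{every} $\fp\in T1$, and reflexivity only provides this for $\fp=e_1(\ast)$. When $T1\neq 1$ this genuinely fails, and the theorem is applied in the paper to such monads: for the free-monoid monad ($\MultiOrd$) take $Z=\{z\}$ carrying only the unary relation $(z)\le z$; then $Z\times X$ carries only unary relations, so every map $Z\times X\to Y$ is a $(\mW,\two)$-functor, whereas $f(z,-)$ would have to preserve all $n$-ary relations of $X$ --- which it need not. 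The same phenomenon occurs for $\mH=(-\times H)$. Hence $\hat f(z)$ need not land in your $[X,Y]$ as defined. Fortunately the gap is local: nowhere else do you use that $g$ is a $\TV$-functor (the $\TV$-functoriality of $\ev$ is extracted from $\pi_1$, $\widetilde{\ev}$ and initiality of $\eta_Y$ alone), so you may define $[X,Y]$ as the set of pairs $(h,g)$ with $g\colon X\to Y$ merely a \emph{map} satisfying $\eta_Y\cdot g=\widetilde{\ev}(h,\eta_X(-))$, and the rest of your argument goes through unchanged. The paper sidesteps the issue by never currying: it keeps $f$ in two variables and quotients $Z$ instead.
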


\begin{proof}
  Given $\TV$-categories $(X,a)$, $(Y,b)$, to build the weak exponential
  $\ll X,Y\gg$ we will show the \emph{cosolution set condition} for the functor
  $-\times(X,a)$.

  For each $\TV$-functor $f:(Z,c)\times(X,a)\to (Y,b)$ we consider its
  reflection $Rf:RZ\times RX\cong R(Z\times X)\to RY$ and we factorise it
  through the weak evaluation in $\Cats{\TV}_\sep$,
  $Rf=\widetilde{\ev}\cdot(\overline{Rf}\times 1_{RX})$, so that in the diagram
  below the outer rectangle commutes.

  Then we define $Z_f=Z/\sim$ by
  \[
    z\sim z'\text{ if both }f(z,x)=f(z',x),\text{ for every }x\in X,\text{ and
    }\overline{Rf}(\eta_Z(z))=\overline{Rf}(\eta_Z(z')),
  \]
  and equip it with the final structure for the projection $q_f:Z\to Z_f$. Then
  $h_f:Z_f\to\ll RX,RY\gg$, with $h_f([z])=\overline{Rf}(\eta_Z(z))$, is a
  $\TV$-functor since its composition with $q_f$ is $\overline{Rf}\cdot\eta_Z$
  and $q_f$ is final. Then we factorise $f$ via the surjection
  $q_f\times 1_X:Z\times X\to Z_f\times X$ as in the diagram below.  Moreover,
  the map $\hat{f}:Z_f\times X\to Y$, with $\hat{f}([z],x)=f(z,x)$, is a
  $\TV$-functor because
  $\eta_Y\cdot\hat{f}=\widetilde{\ev}\cdot(h_f\times\eta_X)$ is and $\eta_Y$ is
  initial.
  \[
    \xymatrix@C=6pt{Z\times X\ar[rrrrrr]^f\ar[dd]_{\eta_Z\times 1_X}\ar[ddrr]^{q_f\times 1_X}&&&&&&Y\ar[dddd]^{\eta_Y}\\
      &&&&&&\\
      RZ\times X\ar[dd]_{\overline{Rf}\times 1_X}&&Z_f\times X\ar[rrrruu]^{\hat{f}}\ar[rr]\ar[ddll]^{h_f\times 1_X}&&(\coprod_g Z_g\times X)\cong(\coprod_gZ_g)\times X\ar[rruu]^{\ev}\\
      &&&&&&\\
      \ll RX,RY\gg\times X\ar[rrr]_{1\times\eta_X}&&&\ll RX,RY\gg\times
      RX\ar[rrr]_{\widetilde{\ev}}&&&RY}
  \]
  Since the cardinality of $Z_f$ is bounded by the cardinality of the set
  $|\ll RX,RY\gg|\times|Y|^{|X|}$, as witnessed by the injective map
  \[
    \begin{array}{rcl}
      Z_f&\to&|\ll RX,RY\gg|\times|Y|^{|X|},\\
      \,[z]&\mapsto&(\overline{Rf}(\eta_Z(z)),f(z,-))
    \end{array}
  \]
  there is only a set of possible $\TV$-categories $Z_f$. Hence we can form its
  coproduct, as in the diagram above, and consider the induced $\TV$-functor
  $\ev:(\coprod_gZ_g)\times X\cong\coprod_g(Z_g\times X)\to Y$ (note that the
  isomorphism follows from extensivity of $\Cats{\TV}$).
\end{proof}

\section{Examples}
\label{sec:examples}

In this section we use Theorem~\ref{th:wcc} to present examples of weakly
cartesian closed categories. Hence, in conjunction with the following theorem
established in \cite{Ros99}, we obtain examples of categories with cartesian
closed exact completion since all other conditions of that theorem are trivially
satisfied in these examples.

\begin{theorem}
  Let $\catC$ be a complete, infinitely extensive and well-powered category in
  which every morphism factorizes as a regular epi followed by a mono, and where
  $f\times 1$ is an epimorphism for every regular epimorphism $f:A\to B$ in
  $\catC$. Then, if $\catC$ is weakly cartesian closed, the exact completion
  $\catC_{\mathrm{ex}}$ of $\catC$ is cartesian closed.
\end{theorem}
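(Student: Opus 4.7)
The plan is to follow Rosick\'y's strategy, realising $\catC_{\mathrm{ex}}$ as the ex/lex completion of $\catC$ via pseudo-equivalence relations (Carboni--Vitale). Objects of $\catC_{\mathrm{ex}}$ can be presented as pairs $(X,R)$ with $R\rightrightarrows X$ a pseudo-equivalence relation in $\catC$; morphisms are equivalence classes of ``compatible'' maps. The embedding $\catC\hookrightarrow\catC_{\mathrm{ex}}$, $X\mapsto(X,\mathrm{id}_X)$, identifies $\catC$ with a class of regular projectives, and every $(X,R)$ is a regular quotient of $X$. Finite products are computed on representatives as $(X,R)\times(Y,S)=(X\times Y,R\times S)$, the key input being precisely that $f\times 1$ preserves regular epimorphisms so that the tensor of quotient maps remains a regular epi in $\catC_{\mathrm{ex}}$.

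Next I would construct, for given $A=(X,R)$ and $B=(Y,S)$, a candidate exponential $B^A$. Pick a weak exponential $H=\langle X,Y\rangle$ in $\catC$ with weak evaluation $\ev\colon H\times X\to Y$. Using well-poweredness and the (reg epi, mono)-factorisation, extract the subobject $W\hookrightarrow H$ representing those ``$h$'' whose partial evaluation $\ev(h,-)\colon X\to Y$ sends $R$-related pairs to $S$-related ones; infinite extensivity is used here to stitch together a single $W$ from the possibly many parameter regions coming from the non-universality of $H$. Equip $W$ with the pseudo-equivalence relation $E$ identifying $h,h'$ precisely when their evaluations are pointwise $S$-related on $X$, and set $B^A:=(W,E)$ in $\catC_{\mathrm{ex}}$, with evaluation induced by $\ev$.

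For the universal property, represent a morphism $f\colon C\times A\to B$ in $\catC_{\mathrm{ex}}$ by $f_0\colon Z\times X\to Y$ on projective covers. The weak exponential yields a (non-unique) transpose $\overline{f_0}\colon Z\to H$ with $\ev\cdot(\overline{f_0}\times 1_X)=f_0$; the compatibility of $f_0$ with $R$ and $S$ forces $\overline{f_0}$ to factor (as a morphism in $\catC_{\mathrm{ex}}$, via its image) through $W$, and the quotient by $E$ absorbs the ambiguity inherent in weakness, giving a well-defined transpose $\overline{f}\colon C\to B^A$. Uniqueness in $\catC_{\mathrm{ex}}$ then follows because two transposes agreeing after composition with $\ev$ are $E$-related by construction.

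The main obstacle is precisely the control of this ambiguity: two weak transposes $\overline{f_0},\overline{f_0}'$ need not be equal in $\catC$, so one must verify that $E$ is a genuine pseudo-equivalence relation in $\catC$ and that the quotient procedure commutes with the products and projective covers used throughout. The hypothesis that $f\times 1$ preserves regular epimorphisms is what allows one to transfer transposes between projective presentations of $C$ and $A$; infinite extensivity handles the disjoint case analyses used when parametrising weak exponentials by subobjects of $H$. Granted these technical ingredients, the universal property of $(W,E)$ is a formal consequence of the universal property of the weak exponential together with the exactness of $\catC_{\mathrm{ex}}$.
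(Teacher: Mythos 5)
This theorem is not proved in the paper at all: it is quoted verbatim from Rosick\'y \cite{Ros99} (it also appears as Theorem~\ref{th:base} in the introduction) and is used only as a black box. So the only possible comparison is with Rosick\'y's published argument, whose overall shape your sketch does reproduce: present $\catC_{\mathrm{ex}}$ via pseudo-equivalence relations, observe that the objects of $\catC$ are regular projective covers, build a candidate exponential from a weak exponential in $\catC$, and use the hypothesis on $f\times 1$ to pass between morphisms out of products and morphisms between covers.

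Two central steps, however, are asserted in a form in which they would fail. First, there is in general no single subobject $W\hookrightarrow H$ ``representing those $h$ whose partial evaluation is compatible with $R$ and $S$'': compatibility is the \emph{existence} of a tracking map $Z\times R\to S$ for a subobject $Z\hookrightarrow H$, and this property does not pass to unions of subobjects because the individual trackings need not glue. The correct construction takes the coproduct $\coprod_i Z_i$ over the (by well-poweredness, small) set of all compatible subobjects $Z_i\hookrightarrow H$; infinite extensivity is what gives $(\coprod_i Z_i)\times X\cong\coprod_i(Z_i\times X)$ so that the evaluation and the trackings can be defined componentwise. The resulting object maps to $H$ but is not a subobject of it, so your ``corestriction through $W$'' step needs to be reformulated. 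Second, the relation $E$ cannot be defined ``pointwise'': it must be produced as an object of $\catC$, e.g.\ as a (weak) limit built from a weak exponential of $X$ and the object part of $S$, and one must then check that it is a pseudo-equivalence relation; reflexivity, symmetry and transitivity are exactly where the pseudo-equivalence structure of $S$ interacts with the weakness of the exponential, and this verification is the technical heart of the proof rather than a formality. Finally, uniqueness of the transpose requires knowing that a morphism $C\times A\to B$ in $\catC_{\mathrm{ex}}$ is determined by its restriction along the cover $Z\times X\to C\times A$, and it is here --- not only in the computation of binary products of quotients --- that the hypothesis that $f\times 1$ is an epimorphism for every regular epimorphism $f$ is genuinely needed.
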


We note that, in order to conclude that $\Cats{\TV}$ is weakly cartesian closed,
we have to check whether $\V$ and $\mT$ satisfy Assumptions~\ref{d:ass:1}.

First we analyse examples where $\mT$ is the identity monad. In this particular
setting we only have to check that \ref{d:ass:1}~\eqref{inj} holds. The category
$\Cats{\V}$ is always monoidal closed, as shown in \cite{Law73}. Therefore, when
$\V$ is a frame considered as a quantale, then $\Cats{\V}$ is cartesian
closed. This is the case of $\two$, and so one concludes that \emph{$\Ord$ is
  cartesian closed}. Moreover, for $\V$ the lattice $([0,\infty],\geq)$ with
$\otimes=\wedge$, $\Cats{\V}$ is \emph{the category of ultrametric spaces},
which \emph{is} therefore also \emph{cartesian closed}.

When $\V=P_+$, $\Cats{\V}$ is the category $\Met$ of Lawvere's metric spaces
\cite{Law73}, which is not cartesian closed (see \cite{CH06} for
details). However, the quantale $P_+$ satisfies \ref{d:ass:1}~\eqref{inj}, and
so from Theorem~\ref{th:wcc} it follows that \emph{$\Met$ is weakly cartesian
  closed}.

Metric and ultrametric spaces can be also viewed as categories enriched in a
quantale based on the complete lattice $[0,1]$ with the usual ``less or equal''
relation $\le$, which is isomorphic to $[0,\infty]$ via the map
$[0,1]\to[0,\infty],\,u\mapsto -\ln(u)$ where $-\ln(0)=\infty$. More in detail,
we consider the following quantale operations on $[0,1]$ with neutral element
$1$.
\begin{enumerate}
\item For $\otimes=*$ being the ordinary multiplication, via the isomorphism
  $[0,1]\simeq[0,\infty]$, this quantale is isomorphic to the quantale $P_+$,
  hence $\Cats{[0,1]}\simeq\Met$.
\item For the tensor $\otimes=\wedge$ being infimum, the isomorphism
  $[0,1]\simeq[0,\infty]$ establishes an equivalence between $\Cats{[0,1]}$ and
  the category of ultrametric spaces and non-expansive maps.
\item Another interesting multiplication on $[0,1]$ is the \emph{\L{}ukasiewicz
    tensor} $\otimes=\odot$ given by $u\odot v=\max(0,u+v-1)$. Via the lattice
  isomorphism $[0,1]\to[0,1],\,u\mapsto 1-u$, this quantale is isomorphic to the
  quantale $[0,1]$ with ``greater or equal'' relation $\ge$ and tensor
  $u\otimes v=\min(1,u+v)$ truncated addition. Therefore $\Cats{[0,1]}$ is
  equivalent to the \emph{category of bounded-by-$1$ metric spaces and
    non-expansive maps}. Moreover, with respect to the ``greater or equal''
  relation and truncated addition on $[0,1]$, the map
  \begin{align*}
    [0,\infty]\to [0,1],\,u\mapsto \min(1,u)
  \end{align*}
  is a surjective quantale morphism; therefore, by Lemma~\ref{d:lem:1}, also
  $[0,1]$ with the \L{}ukasiewicz tensor satisfies \ref{d:ass:1}~\eqref{inj}.
\item More generally, every continuous quantale structure $\otimes$ on the
  lattice $[0,1]$ (with Euclidean topology and the usual ``less or equal''
  relation) with neutral element $1$ satisfies \ref{d:ass:1}~\eqref{inj}. This
  can be shown using the fact, proven in \cite{Fau55} and \cite{MS57}, that
  every such tensor $\otimes:[0,1]\times[0,1]\to[0,1]$ is a combination of the
  three operations on $[0,1]$ described above. More precise:
  \begin{enumerate}
  \item For $u,v\in[0,1]$ and $e\in [0,1]$ idempotent with $u\le e\le v$:
    $u\otimes v=\min(u,v)=u$.
  \item For every non-idempotent $u\in[0,1]$, there exist idempotents $e$ and
    $f$ with $e< u< f$ and such that the interval $[e,f]$ (with the restriction
    of the tensor on $[0,1]$ and with neutral element $f$) is isomorphic to
    $[0,1]$ either with multiplication or \L{}ukasiewicz tensor.
  \end{enumerate}
  Now let $w,u,v\in [0,1]$. We may assume $u\le v$. If $u\otimes v\le w$, then
  clearly
  \[
    w\wedge (u\otimes v)=u\otimes v=\bigvee\{u'\otimes v'\;|\;u'\leq u, v'\leq
    v,u'\otimes v'\leq w\}.
  \]
  We consider now $w< u\otimes v\le u\le v$. If $w$ is idempotent, then
  \[
    w=w\otimes v,\quad w\le u,\quad v\le v;
  \]
  otherwise there are idempotents $e$ and $f$ with $e<w<f$ and $[e,f]$ is
  isomorphic to $[0,1]$ either with multiplication or \L{}ukasiewicz tensor.
  \begin{description}
  \item[Case 1] $v\le f$. Then \ref{d:ass:1}~\eqref{inj} holds since
    $w,u\otimes v,u,v\in [e,f]$.
  \item[Case 2] $f < v$. Then $w=w\wedge v =w\otimes v$, $w\le u$ and $v\le v$.
  \end{description}
  We conclude that \emph{$\Cats{[0,1]}$ is weakly cartesian closed}, for every
  continuous quantale structure $\otimes$ on the lattice $[0,1]$ with neutral
  element $1$.
\end{enumerate}

Now let $\V=\Delta$ be the \emph{quantale of distribution functions} (see
\cite{HR13, CH17} for details). As observed in \cite{HR13}, it verifies
\ref{d:ass:1}~\eqref{inj}, and so we can conclude from Theorem~\ref{th:wcc} that
\emph{the category $\Cats{\Delta}$ of probabilistic metric spaces and
  non-expansive maps is weakly cartesian closed.}

When $\mT$ is not the identity monad, some further work is need to guarantee
Assumptions~\ref{d:ass:1}.

\begin{theorem}
  \begin{enumerate}
  \item The tensor product on the quantale $\V$ defines a $\TV$-functor
    $\otimes:\V\otimes\V\to\V$.
  \item\label{d:item:6} Let $u\in V$ satisfying $u\cdot !\ge\xi\cdot Tu$.
    \[
      \xymatrix{T1\ar[r]^{Tu}\ar[d]_{!}\ar@{}[dr]|{\ge} & T\V\ar[d]^{\xi} \\
        1\ar[r]_{u} & \V}
    \]
    Then $(-,u):X\to X\times\V$ is a $\TV$-functor, for every $\TV$-category
    $X$.
  \item\label{d:item:7} Let $u\in V$ satisfying $u\cdot !=\xi\cdot Tu$. Then
    $T(r\otimes u)=(Tr)\otimes u$, for every $\V$-relation $r:X\relto Y$.
  \end{enumerate}
\end{theorem}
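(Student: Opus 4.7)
The plan is to unpack each item using two pieces of structure: the formula $c(\fw,(x,v))=a(T\pi_X(\fw),x)\otimes\hom_\xi(T\pi_\V(\fw),v)$ for the tensor-product structure on $X\otimes\V$, and the extension formula $Tr(\fx,\fy)=\bigvee\{\xi\cdot Tr(\fw)\mid T\pi_X(\fw)=\fx,\,T\pi_Y(\fw)=\fy\}$ for the lax extension. The strict-theory compatibility $\xi\cdot T\otimes=\otimes\cdot(\xi\times\xi)\cdot\can$ and the $\mT$-algebra identities for $\xi$ then do all the work.

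For (1), $\TV$-functoriality of $\otimes\colon\V\otimes\V\to\V$ asks that $\hom_\xi(T\pi_1(\fw),u)\otimes\hom_\xi(T\pi_2(\fw),v)\le\hom_\xi(T\otimes(\fw),u\otimes v)$ for all $\fw\in T(\V\times\V)$; transposing across the adjunction $-\otimes w\dashv\hom(w,-)$ and then using the two counit inequalities $\xi(T\pi_i(\fw))\otimes\hom_\xi(T\pi_i(\fw),u_i)\le u_i$ reduces this to $\xi(T\otimes(\fw))\le\xi(T\pi_1(\fw))\otimes\xi(T\pi_2(\fw))$, which is one half of the strict-theory tensor compatibility. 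For (2), since $(-,u)=\langle 1_X,u\cdot!_X\rangle$, the first projection of $T(-,u)(\fx)$ is $\fx$ itself, so $\TV$-functoriality collapses to $a(\fx,x)\le a(\fx,x)\otimes\hom(\xi(Tu\cdot T!_X(\fx)),u)$, i.e.\ to $\xi(Tu\cdot T!_X(\fx))\le u$; this is exactly the hypothesis $u\cdot!\ge\xi\cdot Tu$ evaluated at $T!_X(\fx)\in T1$.

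For (3), I would factor $r\otimes u$ as $\otimes\cdot\langle r,u\cdot!_{X\times Y}\rangle$ and compute, using the strict identity $\xi\cdot T\otimes=\otimes\cdot(\xi\times\xi)\cdot\can$ together with naturality of $\can$,
\[
\xi\cdot T(r\otimes u)(\fw)=\xi(Tr(\fw))\otimes\xi(Tu\cdot T!_{X\times Y}(\fw)).
\]
The equality hypothesis $u\cdot!=\xi\cdot Tu$ collapses the second factor to the constant $u$, and distributivity of $\otimes$ over suprema pulls $u$ out of the extension supremum to yield $T(r\otimes u)(\fx,\fy)=Tr(\fx,\fy)\otimes u$. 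There is no real obstacle beyond careful bookkeeping; what is worth noting is the asymmetry that (1) needs only the $\le$ direction of the tensor axiom, whereas the $\ge$ in (3) genuinely relies on both the full strict equality and the equality form of the hypothesis on $u$.
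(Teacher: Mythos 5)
Your proposal is correct and follows essentially the same route as the paper: item (2) reduces to $\xi(T(u\cdot\,!)(\fx))\le u$ exactly as in the paper, and item (3) is the same factorisation $r\otimes u=\otimes\cdot\langle 1_\V,u\cdot\,!\rangle\cdot r$ combined with the strict-theory identity $\xi\cdot T(\otimes)=\otimes\cdot(\xi\times\xi)\cdot\can$ (you make explicit the final use of distributivity of $\otimes$ over suprema, which the paper leaves implicit in ``returning to $\V$-relations''). For item (1) the paper simply cites \cite[Proposition~1.4(1)]{Hof11}, and your direct adjunction argument is a correct rendering of that proof.
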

\begin{proof}
  The first assertion is \cite[Proposition~1.4(1)]{Hof11}.  To see
  (\ref{d:item:6}), assume that $u\in V$ with $u\cdot !\ge\xi\cdot Tu$. Let
  $(X,a)$ be a $\TV$-category, $\fx\in TX$ and $x\in X$. Considering the map
  $X\xrightarrow{!}1\xrightarrow{u}\V$, we have to show that
  \[
    a(\fx,x)\le a(\fx,x)\otimes\hom(T(u\cdot\, !)(\fx),u),
  \]
  which follows immediately from $u\cdot !\ge\xi\cdot Tu$. Finally, to prove
  (\ref{d:item:7}), let $r:X\relto Y$ be a $\V$-relation and $u\in V$ with
  $u\cdot !=\xi\cdot Tu$. Note that the $\V$-relation $r\otimes u:X\relto Y$ is
  given by
  \[
    X\times Y\xrightarrow{\quad r\quad}\V\xrightarrow{\quad\langle 1_\V,u\cdot
      !\rangle\quad}\V\times\V\xrightarrow{\quad\otimes\quad}\V.
  \]
  Hence, applying the $\Set$-functor $T$ to the functions $r:X\times Y\to\V$ and
  $r\otimes u:X\times Y\to\V$ , we obtain
  \begin{align*}
    \xi\cdot T(r\otimes u)
    &=\xi\cdot T(\otimes)\cdot T\langle 1_\V,u\cdot
      !\rangle\cdot Tr\\
    &=\otimes\cdot(\xi\times\xi)\cdot\can_{X,Y} \cdot T\langle 1_\V,u\cdot
      !\rangle\cdot Tr\\
    &=\otimes\cdot \langle \xi,u\cdot !\cdot \xi\rangle\cdot Tr\\
    &=\otimes\cdot \langle 1_\V,u\cdot !\rangle\cdot\xi\cdot Tr.
  \end{align*}
  Therefore, returning to $\V$-relations, we conclude that
  $T(r\otimes u)=(Tr)\otimes u$.
\end{proof}

\begin{remark}
  If $T1=1$, then $u\cdot !=\xi\cdot Tu$ for every $u\in\V$.
\end{remark}
In order to guarantee Assumptions~\ref{d:ass:1}~\eqref{d:item:8}, we need an
extra condition on $\xi$.

\begin{proposition}
  Assume that
  \[
    \xymatrix{T(\V\times\V)\ar[rr]^-{T(\wedge)}\ar[d]_{\langle\xi\cdot
        T\pi_1,\xi\cdot T\pi_2\rangle}\ar@{}[drr]|{\le} && T\V\ar[d]^\xi\\
      \V\times\V\ar[rr]_-{\wedge} && \V.}
  \]
  Then, for all $\V$-relations $r:X\relto X'$ and $s:Y\relto Y'$,
  \[
    \xymatrix{T(X\times Y)\ar[d]|-{\object@{|}}_{T(r\owedge s)}\ar[r]^{\can_{X,Y}}\ar@{}[dr]|{\ge} & TX\times TY\ar[d]|-{\object@{|}}^{Tr\owedge Ts}\\
      T(X'\times Y')\ar[r]_{\can_{X',Y'}} & TX'\times TY'.}
  \]
\end{proposition}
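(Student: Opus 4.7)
The plan is to combine the ``automatic'' inequality $\can_{X',Y'}\cdot T(r\owedge s)\le ((Tr)\owedge(Ts))\cdot \can_{X,Y}$ established in the earlier remark with its converse, which is precisely what the displayed diagram asserts. Unfolding, I must show that for every $\fw\in T(X\times Y)$, writing $\fx:=T\pi_X(\fw)$ and $\fy:=T\pi_Y(\fw)$, and for every $(\fx',\fy')\in TX'\times TY'$,
\[
(Tr)(\fx,\fx')\wedge (Ts)(\fy,\fy')\;\le\; \bigvee_{\fw'\in \can_{X',Y'}^{-1}(\fx',\fy')} T(r\owedge s)(\fw,\fw').
\]

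First I expand the left-hand side via the definition of the extension induced by $\xi$: $(Tr)(\fx,\fx')$ is the supremum of $\xi\cdot Tr(\fp)$ over witnesses $\fp\in T(X\times X')$ with $T\pi_X(\fp)=\fx$ and $T\pi_{X'}(\fp)=\fx'$, and analogously $(Ts)(\fy,\fy')$ over $\fq\in T(Y\times Y')$. Since $\V$ is a Heyting algebra, $-\wedge-$ distributes over $\bigvee$, so it suffices to exhibit, for each pair $(\fp,\fq)$ of such witnesses, a single $\fw'\in T(X'\times Y')$ with $\can_{X',Y'}(\fw')=(\fx',\fy')$ and $T(r\owedge s)(\fw,\fw')\ge (\xi\cdot Tr(\fp))\wedge(\xi\cdot Ts(\fq))$.

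Here (BC) enters. Since $T$ preserves weak pullbacks, $\fw,\fp,\fq$ can be iteratively glued into a single $\fz\in T(X\times Y\times X'\times Y')$ whose images under the $T$-images of the three evident projections onto $X\times Y$, $X\times X'$, $Y\times Y'$ are $\fw$, $\fp$, $\fq$ respectively: first pull $\fw$ and $\fp$ back over $TX$ to obtain $\fw_1\in T(X\times Y\times X')$, then pull $\fw_1$ and $\fq$ back over $TY$ to obtain $\fz$. Setting $\fw':=T\pi_{X'\times Y'}(\fz)$, the construction immediately yields $\can_{X',Y'}(\fw')=(\fx',\fy')$.

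To bound $T(r\owedge s)(\fw,\fw')$ from below I now invoke the lax-commutativity hypothesis on $\xi$. Writing $r\owedge s=\wedge\cdot\langle r\cdot p,\,s\cdot q\rangle$, where $p,q$ denote the projections from $X\times Y\times X'\times Y'$ onto $X\times X'$ and $Y\times Y'$, the element $\mathfrak{U}:=T\langle r\cdot p,\,s\cdot q\rangle(\fz)\in T(\V\times\V)$ satisfies $T\pi_1(\mathfrak{U})=Tr(\fp)$ and $T\pi_2(\mathfrak{U})=Ts(\fq)$ by naturality. The hypothesis then gives
\[
(\xi\cdot Tr(\fp))\wedge(\xi\cdot Ts(\fq))\;\le\;\xi\cdot T(\wedge)(\mathfrak{U})\;=\;\xi\cdot T(r\owedge s)(T\phi(\fz)),
\]
where $\phi:X\times Y\times X'\times Y'\to (X\times Y)\times (X'\times Y')$ is the canonical regrouping; taking $\fw'':=T\phi(\fz)$ as witness in the defining supremum for $T(r\owedge s)(\fw,\fw')$ delivers the required lower bound. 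The main obstacle is merely the careful bookkeeping of the many projections and the iterated application of (BC); no deeper technical issue arises.
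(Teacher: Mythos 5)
Your proof is correct and follows essentially the same route as the paper's: expand $Tr$ and $Ts$ via the $\xi$-induced extension, distribute $\wedge$ over the suprema using that $\V$ is a Heyting algebra, glue the witnesses with $\fw$ into a single element of $T(X\times Y\times X'\times Y')$ via weak-pullback preservation, and apply the lax-commutativity hypothesis on $\xi$ and $\wedge$. The only (immaterial) difference is that you build the glued element by two iterated weak pullbacks, whereas the paper packages this as a single derived weak-pullback square involving $\can$.
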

\begin{proof}
  First we note that, from the preservation of weak pullbacks by $T$, it follows
  that the commutative diagram
  \[
    \xymatrix@+1ex{T(A\times B)\ar[r]^{T(f\times g)}\ar[d]_{\can_{A,B}} & T(X\times Y)\ar[d]^{\can_{X,Y}}\\
      TA\times TB\ar[r]_{Tf\times Tg} & TX\times TY}
  \]
  is also a weak pullback.

  Let $\fw\in T(X\times Y)$, $\fx'\in TX'$ and $\fy'\in TY'$. Put
  $(\fx,\fy)=\can_{X,Y}(\fw)$. By the definition of the extension of $T$ and
  since $\V$ is a Heyting algebra,
  \begin{align*}
    Tr(\fx,\fx')\wedge Ts(\fy,\fy')
    &=\bigvee\left\{\xi\cdot Tr(\fw_1)\wedge\xi\cdot Ts(\fw_2)\;\Bigl\lvert\;
      \begin{aligned}
        \fw_1\in T(X\times X'):\fw_1\mapsto\fx, \fw_1\mapsto\fx'\\
        \fw_2\in T(Y\times Y'):\fw_2\mapsto\fy,\fw_2\mapsto\fy'
      \end{aligned}\right\}.
  \end{align*}
  Note that in
  \[
    \xymatrix{ & T(X\times Y\times X'\times Y')\ar[d]_{\cong}\\
      T(X\times Y)\ar[d]_{\can} & T(X\times X'\times Y\times Y')\ar[l]_-{T(\pi_X\times\pi_Y)}\ar[r]^-{T(r\times s)}\ar[d]_{\can} & T(\V\times\V)\ar[d]_{\can}\ar[r]^-{T(\wedge)}\ar@{}[ddr]|(0.7){\le} & T\V\ar[dd]^\xi\\
      TX\times TY & T(X\times X')\times T(Y\times Y')\ar[l]^-{T\pi_X\times T\pi_Y}\ar[r]_-{Tr\times Ts} & T\V\times T\V\ar[d]_{\xi\times\xi}\\
      && \V\times\V\ar[r]_\wedge &\V}
  \]
  the left hand side is a weak pullback, the middle diagram commutes, and in the
  right hand side we have ``lower path'' $\le$ ``upper path'' as indicated.
  Therefore, for such $\fw_1\in T(X\times X')$ and $\fw_2\in T(Y\times Y')$,
  there exists some $\fv\in T(X\times X'\times Y\times Y')$ which projects to
  $\fw\in T(X\times Y)$ and to
  $(\fw_1,\fw_2)\in T(X\times X')\times T(Y\times Y')$. Hence, taking also into
  account the definition of the $\V$-relation $T(r\owedge s)$,
  \begin{align*}
    Tr(\fx,\fx')\wedge Ts(\fy,\fy')
    &\le\bigvee\left\{\xi\cdot T\!(\wedge)\cdot T(r\times s)(\fv)\;\Bigl\lvert\;\fv\in T(X\times Y\times X'\times Y');
      \begin{aligned}
        & \fv\mapsto\fw \\
        & \fv\mapsto\fx',\fv\mapsto\fy'
      \end{aligned}\right\}\\
    &\le\bigvee\{T(r\owedge s)(\fw,\fw')\mid \fw'\in T(X'\times Y'),\;\can_{X',Y'}(\fw')=(\fx',\fy')\}.
  \end{align*}
\end{proof}

\begin{remark}
  We note that the inequality
  \[
    \xymatrix{T(\V\times\V)\ar[rr]^-{T(\wedge)}\ar[d]_{\langle\xi\cdot
        T\pi_1,\xi\cdot T\pi_2\rangle}\ar@{}[drr]|{\ge} && T\V\ar[d]^\xi\\
      \V\times\V\ar[rr]_-{\wedge} && \V}
  \]
  is always true.
\end{remark}

\begin{corollary}
  If the quantale $\V$ satisfies Assumption~\ref{d:ass:1}~\eqref{inj} and the
  diagrams
  \begin{equation*}
    \xymatrix{T(\V\times\V)\ar[rr]^-{T(\wedge)}\ar[d]_{\langle\xi\cdot
        T\pi_1,\xi\cdot T\pi_2\rangle} && T\V\ar[d]^\xi\\
      \V\times\V\ar[rr]_-{\wedge} && \V}
    \raisebox{-3ex}{\qquad\text{and}\qquad}
    \xymatrix{T1\ar[r]^{Tu}\ar[d]_{!} & T\V\ar[d]^{\xi} \\
      1\ar[r]_{u} & \V}
  \end{equation*}
  commute, for all $u\in\V$, then all Assumptions~\ref{d:ass:1} are satisfied.
\end{corollary}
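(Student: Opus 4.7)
The corollary is essentially an assembly of the previously established results, so my plan is to verify each of the four items of \ref{d:ass:1} in turn, pointing to the exact earlier result that supplies it.

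First, \ref{d:ass:1}~\eqref{inj} is literally one of the hypotheses of the corollary, so nothing needs to be done there. Next, for \ref{d:ass:1}~\eqref{d:item:8}, I would combine the preceding Proposition with the Remark immediately after it: the Proposition gives the inequality $\ge$ in \eqref{infi} precisely from the commutativity of the first displayed square in the statement of the corollary, while the Remark observes that the reverse inequality $\le$ holds unconditionally; together these yield equality, which is what \eqref{infi} requires.

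For \ref{d:ass:1}~\eqref{tensor} and \ref{d:ass:1}~\eqref{fctors}, I would invoke the Theorem whose three items precede this corollary. Part~(1) of that Theorem already states, without any hypothesis on $\xi$, that $\otimes\colon\V\otimes\V\to\V$ is a $\TV$-functor; this is half of \ref{d:ass:1}~\eqref{fctors}. The second commuting square of the corollary's hypothesis is exactly the equation $u\cdot{!}=\xi\cdot Tu$. This equation trivially implies the inequality $u\cdot{!}\ge\xi\cdot Tu$ required in part~(\ref{d:item:6}) of the Theorem, which gives that $(-,u)\colon X\to X\otimes\V$ is a $\TV$-functor for every $u\in\V$, completing \ref{d:ass:1}~\eqref{fctors}. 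The same equality is precisely the hypothesis of part~(\ref{d:item:7}) of the Theorem, which delivers $T(r\otimes u)=(Tr)\otimes u$ for every $\V$-relation $r$ and every $u\in\V$; this is \ref{d:ass:1}~\eqref{tensor}.

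There is essentially no main obstacle here, since the argument is a careful bookkeeping exercise: the content lies entirely in the earlier Proposition, Remark, and Theorem. The only thing that requires minor attention is matching up the directions of inequalities — in particular noticing that the equality in the second commuting square of the hypothesis is what is needed to feed both part~(\ref{d:item:6}) (which asks only for $\ge$) and part~(\ref{d:item:7}) (which needs equality) of the preceding Theorem, and that the commutativity direction in the first hypothesis square is exactly the $\le$-direction that the preceding Proposition turns into the nontrivial $\ge$-direction of \eqref{infi}, the opposite direction being supplied for free by the Remark.
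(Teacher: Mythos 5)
Your proposal is correct and is exactly the assembly of the preceding Proposition, Remark, and three-part Theorem that the paper intends (the corollary is stated without an explicit proof precisely because it is this bookkeeping). One small citation slip: the unconditional inequality $\can_{X',Y'}\cdot T(r\owedge s)\le ((Tr)\owedge(Ts))\cdot\can_{X,Y}$ that you pair with the Proposition comes from the Remark in Section~\ref{sec:expon-tv-categ} following Theorem~\ref{d:thm:1}, not from the Remark immediately after the Proposition in the examples section (which instead records that the $\ge$-direction of the $\xi$--$\wedge$ square is automatic); the mathematics is unaffected.
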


Let $\mT$ be the ultrafilter monad $\mU=(U,m,e)$. Then, when $\V$ is any of the
quantales listed above but $\Delta$, all the needed conditions are
satisfied. Therefore, in particular we can conclude that:
\begin{examples}
  \begin{enumerate}
  \item \emph{The category $\Top=\Cats{(\mU,\two)}$ of topological spaces and
      continuous maps is weakly cartesian closed} (as shown by Rosick\'y in
    \cite{Ros99}).
  \item \emph{The category $\App=\Cats{(\mU,P_+)}$ of approach spaces and
      non-expansive maps is weakly cartesian closed.}
  \item In fact, for each continuous quantale structure on the lattice
    $([0,1],\le)\simeq ([0,\infty],\ge)$, $\Cats{(\mU,[0,1])}$ is weakly
    cartesian closed. In particular, \emph{the category of non-Archimedean
      approach spaces and non-expansive maps studied in \cite{CVO17} is weakly
      cartesian closed.}
  \item If $\V$ is a completely distributive complete lattice with
    $\otimes=\wedge$, then, with
    \[
      \xi: U\V\to\V,\,\fx\mapsto\bigwedge_{A\in\fx}\bigvee A,
    \]
    all the conditions of Theorem \ref{th:wcc} are satisfied (see
    \cite[Theorem~3.3]{Hof07}) and therefore $\Cats{(\mU,\V)}$ is weakly
    cartesian closed. In particular, with $\V=P\two$ being the powerset of a
    2-element set, we obtain that \emph{the category $\catfont{BiTop}$ of
      bitopological spaces and bicontinuous maps is weakly cartesian closed}
    (see \cite{HST14}).
  \end{enumerate}
\end{examples}

\begin{remark}
  For $\V=\Delta$ the quantale of distribution functions, we do not know whether
  there is an appropriate compact Hausdorff topology $\xi:U\V\to\V$ satisfying
  the conditions of this section.
\end{remark}

Now let $\mT$ be the free monoid monad $\mW=(W,m,e)$. For each quantale $\V$, we
consider
\[
  \xi: W\V\to\V,\,(v_1,\dots,v_n)\mapsto v_1\otimes\dots\otimes v_n,\,()\mapsto
  k
\]
which induces the extension $W:\Rels{\V}\to\Rels{\V}$ sending $r:X\relto Y$ to
the $\V$-relation $Wr:WX\relto WY$ given by
\[
  Wr((x_1,\dots,x_n),(y_1,\dots,y_m))=
  \begin{cases}
    r(x_1,y_1)\otimes\dots\otimes r(x_n,y_n) &\text{if $n=m$,}\\
    \bot & \text{if $n\neq m$}.
  \end{cases}
\]
The category $\Cats{(\mW,\two)}$ is equivalent to the category $\MultiOrd$ of
\emph{multi-ordered sets} and their morphisms (see~\cite{HST14}), more
generally, $(\mW,\V)$-categories can be interpreted as multi-$\V$-categories and
their morphisms. The representable multi-ordered sets are precisely the ordered
monoids, which is a special case of \cite{Her00,Her01} describing monoidal
categories as representable multi-categories (see also \cite{CCH15}). We recall
that the separated injective multi-ordered sets are precisely the quantales (see
\cite{LBKR12} and also \cite{Sea10}), and we conclude:
\begin{proposition}
  Every quantale is exponentiable in $\MultiOrd$.
\end{proposition}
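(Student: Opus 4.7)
The plan is to apply the frame-case proposition in Section~\ref{sec:inject-tv-categ}, which states that whenever $\V$ is a frame and the diagram \eqref{infi} commutes, every injective $\TV$-category is exponentiable in $\Cats{\TV}$. Here $\V=\two$ is obviously a frame, and the sentence immediately preceding the statement recalls that quantales are precisely the separated injective objects of $\MultiOrd=\Cats{(\mW,\two)}$. Hence the proposition reduces to checking \eqref{infi} for the free monoid monad $\mW$ extended to $\Rels{\two}$ via $\xi=\wedge$.

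I would carry out that check by direct unfolding of the definition of the extension. For an ordinary relation $r$, $Wr$ matches only between words of the same length, reducing in that case to the componentwise conjunction of the values of $r$; in particular empty words relate only to empty words, with value $\xi()=k=1$. Given $r:X\relto X'$, $s:Y\relto Y'$ and $\fw\in W(X\times Y)$, $\fw'\in W(X'\times Y')$, both $W(r\owedge s)(\fw,\fw')$ and $((Wr)\owedge(Ws))(\can(\fw),\can(\fw'))$ then collapse to the same componentwise conjunction on equal-length pairs, and both vanish on unequal-length pairs (since $Wr$ and $Ws$ individually vanish when the source and target lengths disagree). Hence \eqref{infi} commutes, and the frame-case proposition yields exponentiability.

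The computation is essentially routine; the only mildly delicate point is the empty-word case, where $\xi()=1$ forces one to treat length zero separately and confirm that both sides agree there too. It is worth noting that the stronger Theorem~\ref{d:thm:2} is not directly available here, because Assumption~\ref{d:ass:1}~\eqref{fctors} fails for $(\mW,\two)$: the map $(-,0):X\to X\otimes\two$ is not in general a $\TV$-functor, since the composite $\xi\cdot W0\cdot W!$ sends the empty word to $1$ rather than to $0$. Because $\V=\two$ is already a frame, however, the frame-case proposition is precisely what is needed, and no recourse to Theorem~\ref{d:thm:2} is required.
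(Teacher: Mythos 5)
Your proposal is correct and follows essentially the paper's own (implicit) route: identify quantales with the separated injective objects of $\MultiOrd=\Cats{(\mW,\two)}$ and invoke the frame-case proposition of Section~\ref{sec:inject-tv-categ}, which only requires checking \eqref{infi}. Your hands-on verification of \eqref{infi} (including the empty-word case) is sound; the paper would instead obtain it from its general proposition on the compatibility of $\xi$ with $\wedge$, since $\xi\cdot W(\wedge)\le\wedge\cdot\langle\xi\cdot W\pi_1,\xi\cdot W\pi_2\rangle$ holds for the word monad, but the two computations amount to the same thing.
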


\begin{theorem}
  If the quantale $\V$ is a frame, then $\Cats{(\mW,\V)}$ is weakly cartesian
  closed. In particular, $\MultiOrd$ is weakly cartesian closed.
\end{theorem}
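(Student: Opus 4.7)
The plan is to invoke Theorem~\ref{th:wcc}, but with a single substitution. Assumptions~\ref{d:ass:1}~\eqref{tensor} and~\eqref{fctors} actually fail for the pair $(\mW,\V)$: the empty word $()\in WX$ forces $\xi\cdot Wu$ to return $k=\top$ rather than $u$, so $(-,u)\colon X\to X\otimes\V$ is in general not a $\TV$-functor. Consequently Theorem~\ref{d:thm:2} is not directly available. What remains true is the only consequence of those assumptions that the proof of Theorem~\ref{th:wcc} actually uses, namely that injective $\TV$-categories are exponentiable; in the frame case this is supplied by the Proposition immediately preceding Assumptions~\ref{d:ass:1}, whose sole hypothesis is the commutativity of diagram~\eqref{infi}.

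The first step is to observe that $\xi\colon W\V\to\V$, $(v_1,\dots,v_n)\mapsto v_1\wedge\dots\wedge v_n$ (with $\xi(())=\top$), defines a strict topological theory on the frame $\V$: associativity of $\wedge$ makes $\xi$ a $\mW$-algebra, and distributivity of $\wedge$ over arbitrary meets guarantees that $\xi$ preserves the $\V$-category structure $\hom$. Hence Theorem~\ref{d:thm:3} supplies, for every $(\mW,\V)$-category $X$, a fully faithful Yoneda embedding $\yoneda_X\colon X\to PX$ into an injective $(\mW,\V)$-category.

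The second step is to verify diagram~\eqref{infi}. Given $\V$-relations $r\colon X\relto X'$ and $s\colon Y\relto Y'$, a direct unwinding of the extension formula shows that on inputs $\fw=\bigl((x_i,y_i)\bigr)_{1\le i\le n}\in W(X\times Y)$ and $(\fx',\fy')\in WX'\times WY'$ of respective lengths $p$ and $q$, both composites evaluate to $\bot$ unless $n=p=q$, in which case both return $\bigwedge_{i=1}^{n}\bigl(r(x_i,x'_i)\wedge s(y_i,y'_i)\bigr)$. By the frame-case Proposition, every injective $(\mW,\V)$-category is therefore exponentiable in $\Cats{(\mW,\V)}$.

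With Yoneda embeddings into exponentiable injectives available, the proof that $\Cats{\TV}_\sep$ is weakly cartesian closed transfers verbatim, since its only ingredients are precisely these two facts. The subsequent reflector argument in the proof of Theorem~\ref{th:wcc}, which uses only that $R\colon\Cats{\TV}\to\Cats{\TV}_\sep$ preserves finite products and that $\Cats{\TV}$ is infinitely extensive, likewise applies without change. Specialising to $\V=\two$ yields the statement for $\MultiOrd$. The main subtlety is to resist invoking Theorem~\ref{th:wcc} as a black box and instead to rerun its proof, substituting the frame-specific Proposition for every appeal to Theorem~\ref{d:thm:2}.
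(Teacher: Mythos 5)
Your proposal is correct and follows exactly the route the paper intends (the paper states this theorem without writing out a proof): in the frame case one substitutes the Proposition on representable $\TV$-categories for Theorem~\ref{d:thm:2}, checks that diagram~\eqref{infi} commutes for the word extension, and reruns the proofs of the separated case and of Theorem~\ref{th:wcc}, whose only reliance on Assumptions~\ref{d:ass:1} is through the exponentiability of the injective $\TV$-categories $PX$. Your observation that \ref{d:ass:1}~\eqref{tensor} and~\eqref{fctors} fail for $\mW$ because the empty word forces $\xi\cdot Wu$ to return $k$ correctly pinpoints why the theorem carries the frame hypothesis.
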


Finally, for a monoid $(H,\cdot,h)$, we consider the monad
$\mH=(-\times H,m,e)$, with $m_X:X\times H\times H\to X\times H$ given by
$m_X(x,a,b)=(x,a\cdot b)$ and $e_X:X\to X\times H$ given by $e_X(x)=(x,h)$. Here
we consider
\[
  \xi:\V\times H\to\V,\,(v,a)\mapsto v,
\]
which leads to the extension $-\times H:\Rels{\V}\to\Rels{\V}$ sending the
$\V$-relation $r:X\relto Y$ to the $\V$-relation
$r\times H:X\times H\relto Y\times H$ with
\[
  r\times H((x,a),(y,b))=
  \begin{cases}
    r(x,y) & \text{if $a=b$,}\\
    \bot & \text{if $a\neq b$.}
  \end{cases}
\]
In particular, $(\mH,\two)$-categories can be interpreted as \emph{$H$-labelled
  ordered sets} and \emph{equivariant maps}.

For every quantale $\V$ and every $v:1\to\V$, the diagrams
\[
  \xymatrix{\V\times\V\times H\ar[r]^-{\wedge\times 1_H}\ar[d]_{\pi_{1,2}} &
    \V\times H\ar[d]^{\xi=\pi_1} \\ \V\times\V\ar[r]_{\wedge} & \V}
  \raisebox{-3ex}{\qquad\text{and}\qquad} \xymatrix{1\times H\ar[r]^{v\times
      1_H}\ar[d]_{!} & V\times H\ar[d]^{\xi} \\ 1\ar[r]_{v} & \V}
\]
commute, therefore we obtain:

\begin{theorem}
  For every quantale $\V$ satisfying Assumption~\ref{d:ass:1}~\eqref{inj}, the
  category $\Cats{(\mH,\V)}$ is weakly cartesian closed.
\end{theorem}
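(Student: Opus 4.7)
The plan is to invoke Theorem~\ref{th:wcc}, which reduces the claim to verifying all four parts of Assumptions~\ref{d:ass:1} for the quantale $\V$ equipped with the extension of $\mH$ induced by $\xi(v,a)=v$. Part \eqref{inj} is exactly the hypothesis, so the real task is to check \eqref{d:item:8}, \eqref{tensor}, and \eqref{fctors}.

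My first step is to observe that, for the specific $\xi(v,a)=v$, the two diagrams displayed just before the theorem statement commute as strict equalities. In the left-hand diagram, both paths send $(u,v,a)\in\V\times\V\times H$ to $u\wedge v$; in the right-hand diagram, both paths send $(\ast,a)\in 1\times H$ to $v\in\V$. In particular, the right-hand square yields $u\cdot !\,=\,\xi\cdot Tu$ for every $u\in\V$, which is the \emph{equality} hypothesis required in part \eqref{d:item:7} of the penultimate theorem of the section.

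Next I apply the Corollary proved immediately above. Its hypotheses are precisely the commutativity of the two diagrams just mentioned, and together with the quantale hypothesis \eqref{inj} its conclusion is that Assumptions~\ref{d:ass:1}\eqref{d:item:8} and the half of \eqref{fctors} concerning $(-,u)\colon X\to X\otimes\V$ both hold. The other half of \eqref{fctors}, namely that $\otimes\colon\V\otimes\V\to\V$ is a $\TV$-functor, is part (1) of that same theorem and is independent of the particular $\mT$. Finally, \eqref{tensor} follows from part \eqref{d:item:7} of that theorem since $u\cdot !=\xi\cdot Tu$ as verified above.

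With all of Assumptions~\ref{d:ass:1} now in force, Theorem~\ref{th:wcc} applies and gives weak cartesian closedness of $\Cats{(\mH,\V)}$. There is no real obstacle: the only step requiring thought is the elementary verification that the two compatibility diagrams for this $\xi$ commute (as equalities, in fact), after which the result is a direct corollary of what has already been established.
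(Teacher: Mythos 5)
Your proposal is correct and follows essentially the same route as the paper: the paper's proof consists precisely of observing that, for $\xi=\pi_1$, the two compatibility diagrams commute (as equalities), and then invoking the Corollary together with Theorem~\ref{th:wcc}. Your additional unpacking of which parts of the auxiliary theorem deliver which items of Assumptions~\ref{d:ass:1} is consistent with the paper and adds nothing that changes the argument.
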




\end{document}